\theoremstyle{plain}
\newtheorem{theorem}{Theorem}[section]
\newtheorem{lemma}[theorem]{Lemma}
\newtheorem{prop}[theorem]{Proposition}
\newtheorem{corollary}[theorem]{Corollary}
\newtheorem{definition}[theorem]{Definition}
\newtheorem{problem}[theorem]{Problem}
\numberwithin{equation}{section}
\renewcommand{\S}{{\mathbb S}}
\newcommand{\R}{{\mathbb R}}
\newcommand{\C}{{\mathbb C}}
\begin{document}

\title[Lower bound for the Green energy on $\mathbb{S}^n$]{A lower bound for the logarithmic energy on $\mathbb{S}^2$ and for the Green energy on $\mathbb{S}^n$}

\author[C. Beltr\'an]{Carlos Beltr\'an}
\address{Departamento de Matem\'aticas, Estad\'{\i}stica y \\Computaci\'on,
Universidad de Cantabria. 39005. Santander, Spain}
\email{beltranc@unican.es}

\author[F. Lizarte]{F\'atima Lizarte}
\address{Departamento de Matem\'aticas, Estad\'{\i}stica y \\Computaci\'on,
Universidad de Cantabria. 39005. Santander, Spain}
\email{fatima.lizarte@unican.es}

\keywords{Minimal logarithmic energy, Green energy}

\subjclass{Primary: 31C12, 31C20, 41A60}

\thanks{Both authors belong to the Universidad de Cantabria and are supported by Grant PID2020-113887GB-I00 funded by MCIN/ AEI /10.13039/501100011033. The second author has also been supported by Grant PRE2018-086103 funded by MCIN/AEI/10.13039/501100011033 and by ESF Investing in your future}

\begin{abstract}
We show an alternative proof of the sharpest known lower bound for the logarithmic energy on the unit sphere $\mathbb{S}^2$.
We then generalize this proof to get new lower bounds for the Green energy on the unit $n$--sphere  $\mathbb{S}^n$.
\end{abstract}

\maketitle

\section{Introduction}

\noindent We consider Smale's $7$--th problem \cite{Smale2000}, that was actually posed by Michael Shub and Stephen Smale in their search for an algorithm to explicitly generate sequences of well--conditioned polynomials (see \cite{SS93III,BEMO19, BeltranLizarte21}) and is related to the distribution of a finite number of points on the unit sphere $\mathbb{S}^2$:

\begin{problem}[Smale's 7th problem]\label{Prob7Smale}
Can one find $N$ points $p_1,\ldots, p_N\in\mathbb{S}^2$ such that
$$
\mathcal{E}(p_1,\ldots, p_N)\leqslant m_N + c\ln N,
$$
for $c$ a universal constant?
\end{problem}
\noindent  Here, $\mathcal{E}(p_1,\ldots, p_N)$ is the logarithmic energy:
$$
\mathcal{E}(p_1, \ldots, p_N)=\displaystyle\sum_{i\neq j} \ln\frac{1}{\|p_i-p_j\|},
$$
and $m_N=\min_{p_1,\ldots, p_N\in \mathbb{S}^2}\mathcal{E}(p_1,\ldots,p_N)$. The problem is well known and so are its origins that can be found in the references above as well as in \cite[Section 6.7]{BHS19} and \cite{Beltran2020}, so we omit the (quite long!) historical perspective in this note.

\medskip

Smale's 7th problem, despite the simplicity of its statement, is considered a extremely difficult question. A major obstacle is that the value of the minimal logarithmic energy on the sphere is not fully known. The current knowledge, after \cite{Wagner89,RSZ94,Dubickas96,Brauchart08,BeterminSandier18,Stefan}, is the following asymptotic expansion:
$$
m_N= \kappa N^2-\frac{1}{2}N\ln N +C_{\log} N+o(N),
$$
where $\kappa$ is the continuous energy
$$
\kappa = \frac{1}{\text{vol}(\S^2)^2}\int_{p,q\in\mathbb{S}^2} \ln\frac{1}{\|p-q\|}dp\,dq=\frac12-\ln 2,
$$
and  $C_{\log}$ is a constant whose value is not known. From \cite{Lauritsen21} and \cite{BeterminSandier18} we have
\begin{equation}\label{eq:cotasClog}
-0.0568\ldots=\ln 2- \frac34\leqslant C_{\log} \leqslant 2\ln 2+\frac12\ln\frac23+3\ln\frac{\sqrt{\pi}}{\Gamma(1/3)}=-0.0556\ldots.
\end{equation}
The upper bound has been conjectured to be an equality, and one of the most important open problems in the area is the exact computation of this constant; see \cite{BeterminSandier18}, \cite{BHS19}, \cite{BrauchartHardinSaff12} and \cite{Stefan} for context. The lower bound $\ln 2- \frac34\leqslant C_{\log}$ proved by Lauritsen in \cite[Appendix B]{Lauritsen21} follows from an argument in the real plane and then invokes a sophisticated result by B\'etermin and Sandier \cite[p. 3, Theorem 1.5]{BeterminSandier18} that relates the energy in the $2$--sphere to a certain renormalized energy in the plane, and that is a purely $2$--dimensional argument which does not seem easy to translate to higher dimensions. In these pages, we describe how the argument by Lauritsen can be directly adapted to work on the sphere $\S^2$ without the use of \cite[p. 3, Theorem 1.5]{BeterminSandier18} and moreover we also show that it can be quite straightforwardly extended to spheres of arbitrary dimension.

\section{An alternative proof for Lauritsen's lower bound $C_{\mathrm{log}}\geq\ln 2- \frac{3}{4}$}

\subsection{Mean values of the logarithmic energy in the $2$--sphere.}

Let $B(p_0,a)$ be the geodesic ball centered at $p_0\in\mathbb{S}^2$ with radius $a>0$, that is, $B(p_0,a)=\{p\in\mathbb{S}^2: d_R(p_0,p)<a\}\subseteq \mathbb{S}^2$ where $d_R(\cdot,\cdot)=\arccos\langle \cdot, \cdot \rangle$
is the Riemannian distance on the unit sphere. Let $|B_a|$ be the volume of this ball. The following result is known:

\begin{prop}\label{int_bolas_esfera}
Let $p_0, p\in\mathbb{S}^2$ and $a\in(0,\pi)$. Then, $|B_a|=4\pi\sin^2\frac{a}{2}$ and
\begin{enumerate}[i)]
\item If $p\notin B(p_0,a)$
$$
\frac{1}{|B_a|}\int_{q\in B(p_0,a)} \ln\|p-q\|dq= \ln\|p-p_0\|-\frac12-\cot^2\frac{a}{2}\ln\cos\frac{a}{2}.
$$
\item If $p\in B(p_0,a)$
$$
\frac{1}{|B_a|}\int_{q\in B(p_0,a)} \ln\|p-q\|dq=\ln 2-\frac{1}{2}-\frac{\cot^2 \frac{a}{2}}{2}\ln\left(1-\frac{\|p-p_0\|^2}{4}\right)+\ln\left(\sin\frac{a}{2}\right).
$$
\end{enumerate}
\end{prop}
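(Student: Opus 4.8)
The plan is to compute the mean value of $\ln\|p-q\|$ over a geodesic ball by exploiting the rotational symmetry of the sphere. Since the integrand depends on $p$ and $q$ only through geodesic distances, I would place $p_0$ at the north pole and use spherical coordinates $(\theta,\varphi)$ where $\theta=d_R(p_0,q)$ is the polar angle; then $dq=\sin\theta\,d\theta\,d\varphi$, the ball $B(p_0,a)$ is $\{\theta<a\}$, and $\|p_0-q\|^2 = 2-2\cos\theta = 4\sin^2(\theta/2)$, which immediately gives $|B_a| = 2\pi\int_0^a \sin\theta\,d\theta = 2\pi(1-\cos a) = 4\pi\sin^2(a/2)$. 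The key analytic ingredient is a \emph{mean value property}: on $\mathbb{S}^2$ the function $q\mapsto \ln\|p-q\|$ (equivalently, essentially $\ln\sin(d_R(p,q)/2)$) is, up to an explicit smooth/harmonic correction, a fundamental-solution-type kernel for the Laplace–Beltrami operator, so averages over geodesic spheres centered away from $p$ can be evaluated explicitly. Concretely I would use that $\Delta_{\mathbb{S}^2}\ln\|p-q\| = 1 - 2\pi\delta_p$ in the distributional sense (the constant $1$ coming from the fact that the equilibrium measure is uniform with total mass matching $|\mathbb{S}^2|=4\pi$), which is the engine behind both formulas.

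For part (i), with $p\notin B(p_0,a)$, the plan is to reduce to a one-dimensional integral in $\theta$ by first averaging over the azimuthal angle $\varphi$ for fixed $\theta$: the inner average $\frac{1}{2\pi}\int_0^{2\pi}\ln\|p-q(\theta,\varphi)\|\,d\varphi$ is a classical computation (a $\ln$-potential on a circle of latitude) that evaluates, when $p$ is at geodesic distance exceeding $\theta$ from the pole, to $\ln\|p-p_0\|$ if $d_R(p_0,p)\ge\theta$ — essentially the circle-of-latitude mean value property. Granting that, the outer integral becomes $\frac{1}{|B_a|}\int_0^a \big(\text{const in }\theta \text{ plus a }\theta\text{-dependent piece}\big)2\pi\sin\theta\,d\theta$, and one is left with elementary integrals of the form $\int_0^a \sin\theta\,\ln\sin(\theta/2)\,d\theta$ and $\int_0^a \sin\theta\,\ln\cos(\theta/2)\,d\theta$. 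Using $\sin\theta = 2\sin(\theta/2)\cos(\theta/2)$ and the substitution $u=\sin^2(\theta/2)$ (so $du = \sin(\theta/2)\cos(\theta/2)\,d\theta$) turns these into $\int_0^{\sin^2(a/2)} \ln u\,du$ and $\int_0^{\sin^2(a/2)}\ln(1-u)\,du$, both of which integrate by parts in closed form; collecting terms and simplifying with half-angle identities should produce exactly $\ln\|p-p_0\| - \tfrac12 - \cot^2(\tfrac a2)\ln\cos(\tfrac a2)$.

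For part (ii), with $p\in B(p_0,a)$, the same symmetry reduction applies but now the circle-of-latitude average has two regimes: $\tfrac{1}{2\pi}\int_0^{2\pi}\ln\|p-q(\theta,\varphi)\|\,d\varphi$ equals $\ln\|p-p_0\|$ when $\theta\le d_R(p_0,p)$ and equals $\ln(2\sin(\theta/2))$ (the value at the ``pole'' $\theta$, i.e.\ $\ln$ of the chord to the latitude circle) when $\theta\ge d_R(p_0,p)$ — this is the ``$\max$'' behavior of a logarithmic potential, $\log\max(r,\rho)$ type. So I would split $\int_0^a = \int_0^{d_R(p_0,p)} + \int_{d_R(p_0,p)}^a$, plug in the two expressions, and again reduce to the elementary $u$-integrals above but now with the split point $u = \|p-p_0\|^2/4 = \sin^2(d_R(p_0,p)/2)$. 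Careful bookkeeping of the boundary terms, together with $\cot^2(a/2) = \big(1-\sin^2(a/2)\big)/\sin^2(a/2)$ and $\ln\big(1-\|p-p_0\|^2/4\big) = 2\ln\cos(d_R(p_0,p)/2)$, should assemble into the stated formula.

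The main obstacle I anticipate is not any single step but the bookkeeping: getting the azimuthal (circle-of-latitude) mean value property stated cleanly with the correct branch in each regime, and then tracking the numerous constants — the $-\tfrac12$, the $\ln 2$'s, the $\cot^2(a/2)$ prefactors — through the integration-by-parts without sign or factor-of-two errors. A cleaner alternative that avoids coordinates entirely would be to invoke the distributional identity $\Delta_{\mathbb{S}^2}\big(\ln\|p-q\|\big) = 1 - 4\pi\,\delta_p/|\mathbb{S}^2|\cdot|\mathbb{S}^2|$ (normalized appropriately) and integrate against the indicator of $B(p_0,a)$ using Green's formula, converting the ball average into a boundary flux plus a volume term; this reproduces the $-\tfrac12 - \cot^2(a/2)\ln\cos(a/2)$ correction as precisely the harmonic-measure/flux contribution, and is likely how I would organize the write-up to keep the algebra minimal.
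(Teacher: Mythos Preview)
The paper does not prove this proposition in situ; it simply cites \cite[Proposition~3.2]{Beltran2013} and notes that a rescaling from the radius-$\tfrac12$ sphere to the unit sphere is required. So your proposal is not so much a different proof as the only proof on the table, and the strategy---reduce to an azimuthal average and then a one-variable integral in the polar angle---is the natural one and does work.

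There is, however, a concrete error in your key step. The circle-of-latitude mean value you assert is false on $\mathbb{S}^2$: the function $q\mapsto\ln\|p-q\|$ is \emph{not} harmonic there (in the paper's convention it satisfies $\Delta_q\ln\|p-q\|=\tfrac12-2\pi\delta_p$, not $1-2\pi\delta_p$ as you wrote), so averaging over a geodesic circle does \emph{not} return the value at the centre. A direct computation via $\frac{1}{2\pi}\int_0^{2\pi}\ln(A-B\cos\varphi)\,d\varphi=\ln\frac{A+\sqrt{A^2-B^2}}{2}$ gives, with $\rho=d_R(p_0,p)$,
\[
\frac{1}{2\pi}\int_0^{2\pi}\ln\|p-q(\theta,\varphi)\|\,d\varphi=
\begin{cases}
\ln\|p-p_0\|+\ln\cos\dfrac{\theta}{2}, & \theta\le\rho,\\[6pt]
\ln\Big(2\sin\dfrac{\theta}{2}\Big)+\ln\cos\dfrac{\rho}{2}, & \theta\ge\rho.
\end{cases}
\]
Once this is corrected, your plan goes through exactly as written: in case (i) only the first branch occurs and $\frac{2\pi}{|B_a|}\int_0^a\sin\theta\,\ln\cos\tfrac{\theta}{2}\,d\theta=-\tfrac12-\cot^2\tfrac{a}{2}\ln\cos\tfrac{a}{2}$ is the stated correction; in case (ii) the extra $\ln\cos\tfrac{\rho}{2}=\tfrac12\ln\big(1-\tfrac{\|p-p_0\|^2}{4}\big)$ in the second branch is precisely what generates the $\cot^2\tfrac{a}{2}$ term. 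The Green's-formula alternative you sketch at the end also works and is in spirit how the paper treats the $\mathbb{S}^n$ analogue in Proposition~\ref{mediaGreenBola}.
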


\begin{proof}
This result is \cite[Proposition 3.2]{Beltran2013}. Note that in that reference, the result is given for the radius $1/2$ sphere in $\mathbb{R}^3$. The translation to the unit sphere is straightforward.
\end{proof}

\begin{lemma}\label{aux_cota_bolasnodisj}
The function $F:  (0,2)\times(0,\pi) \to  \mathbb{R}$ given by
\begin{equation*}
  F(t,\alpha)=  \ln2+\ln\sin\frac{\alpha}{2}+\displaystyle\frac{\cot^2\frac{\alpha}{2}}{2}\left(2\ln\left(\cos\frac{\alpha}{2}\right)- \ln\left(1-\frac{t^2}{4}\right)\right)-\ln t,\end{equation*}
is not negative.
\end{lemma}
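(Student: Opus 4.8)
The plan is to fix $\alpha\in(0,\pi)$ and treat $F$ as a one–variable function of $t$ on $(0,2)$, showing that it has a unique interior critical point, that this point is a global minimum, and that $F$ vanishes there. The starting observation is that $F(t,\alpha)=0$ exactly at $t=2\sin\frac{\alpha}{2}$: for this value $1-\frac{t^2}{4}=\cos^2\frac{\alpha}{2}$, so the bracketed term $2\ln\cos\frac{\alpha}{2}-\ln\!\big(1-\frac{t^2}{4}\big)$ vanishes, and what is left is $\ln2+\ln\sin\frac{\alpha}{2}-\ln\!\big(2\sin\frac{\alpha}{2}\big)=0$. Geometrically $2\sin\frac{\alpha}{2}$ is the chord length of a geodesic arc of length $\alpha$, so this is just the statement that the two formulas in Proposition~\ref{int_bolas_esfera} agree on the sphere $\{d_R(p_0,p)=\alpha\}$; indeed $F(t,\alpha)$ is precisely the difference of expression ii) and expression i) of that proposition, evaluated at $\|p-p_0\|=t$ and $a=\alpha$.

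Next I would differentiate in $t$. A direct computation, using $1+\cot^2\frac{\alpha}{2}=\csc^2\frac{\alpha}{2}$, gives
\[
\frac{\partial F}{\partial t}(t,\alpha)=\frac{\cot^2\frac{\alpha}{2}\,t}{4-t^2}-\frac1t=\frac{t^2\csc^2\frac{\alpha}{2}-4}{t\,(4-t^2)}.
\]
For $\alpha\in(0,\pi)$ we have $\frac{\alpha}{2}\in\big(0,\frac{\pi}{2}\big)$, hence $\csc^2\frac{\alpha}{2}$ is a finite positive constant, and on $(0,2)$ the denominator $t(4-t^2)$ is strictly positive. Therefore the sign of $\partial_t F$ coincides with the sign of $t^2\csc^2\frac{\alpha}{2}-4$, which is strictly increasing in $t$, negative for $0<t<2\sin\frac{\alpha}{2}$ and positive for $2\sin\frac{\alpha}{2}<t<2$.

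It follows that, for each fixed $\alpha$, the map $t\mapsto F(t,\alpha)$ is strictly decreasing on $\big(0,2\sin\frac{\alpha}{2}\big)$ and strictly increasing on $\big(2\sin\frac{\alpha}{2},2\big)$, so it attains its minimum over $(0,2)$ at $t=2\sin\frac{\alpha}{2}$, where by the first step its value is $0$. Since $\alpha\in(0,\pi)$ was arbitrary, we conclude $F(t,\alpha)\geq 0$ on all of $(0,2)\times(0,\pi)$, as claimed.

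I do not anticipate a real obstacle: this is elementary calculus once the substitution $t=2\sin\frac{\alpha}{2}$ is identified as the place where everything collapses. The only points needing a little care are the bookkeeping in $\partial_t F$ and the remark that $\csc^2\frac{\alpha}{2}$ stays finite and positive throughout $\alpha\in(0,\pi)$ — the excluded endpoint $\alpha=\pi$ would make $\cot\frac{\alpha}{2}$ vanish and degenerate the expression, but it lies outside the stated domain — so that the sign analysis of $\partial_t F$ is valid on the whole open interval $(0,2)$.
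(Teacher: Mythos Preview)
Your proof is correct and follows essentially the same approach as the paper's: fix $\alpha$, compute $\partial_t F(t,\alpha)=\dfrac{\cot^2\frac{\alpha}{2}\,t}{4-t^2}-\dfrac{1}{t}$, locate the unique critical point at $t=2\sin\frac{\alpha}{2}$, and verify that $F$ vanishes there. The only cosmetic difference is that the paper argues the critical point is a minimum via $\lim_{t\to0}F=\lim_{t\to2}F=+\infty$, whereas you analyze the sign of $\partial_t F$ directly; both are fine.
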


\begin{proof}
It is easy to see that $F(t,\alpha)$ has a minimum in $t=2\sin\frac{\alpha}{2}$ for any fixed $\alpha\in(0,\pi)$. Indeed, $\lim_{t\to0}F(t,\alpha)=\lim_{t\to2} F(t,\alpha)=\infty$ and
\begin{align*}
\frac{\partial}{\partial t}F(t,\alpha)=t\frac{\cot^2\frac{\alpha}{2}}{4-t^2}-\frac{1}{t}, \quad \text{which equals $0$ iff} \quad t=2\sin\frac{\alpha}{2},
\end{align*}
so the minimum is at $t=2\sin\frac{\alpha}{2}$. Finally, $F(2\sin\frac{\alpha}{2},\alpha)\equiv0$ and we are done.
\end{proof}

\begin{corollary}\label{cor:cotainferiormediaS2}
  Let $p_0, p\in\mathbb{S}^2$ and $a>0$. Then,
$$
\frac{1}{|B_a|}\int_{q\in B(p_0,a)} \ln\|p-q\|dq\geq \ln\|p-p_0\|-\frac12-\cot^2 \frac{a}{2}\ln\cos\frac{a}{2},
$$
with an equality if $p\notin B(p_0,a)$.
\end{corollary}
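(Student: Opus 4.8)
The plan is to split into two cases according to whether $p$ lies inside the geodesic ball $B(p_0,a)$ or not, and in each case quote the matching formula from Proposition \ref{int_bolas_esfera}. I may assume $a\in(0,\pi)$ (for $a\geq\pi$ the ball is the whole sphere up to a null set, and the inequality reduces to the elementary $\|p-p_0\|\leq 2$), and I may assume $p\neq p_0$: if $p=p_0$ the left--hand side is a finite number, since the logarithmic singularity of $\ln\|p-q\|$ is integrable on the $2$--sphere, whereas the right--hand side equals $-\infty$, so there is nothing to prove. Write $t=\|p-p_0\|$.

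If $p\notin B(p_0,a)$, part i) of Proposition \ref{int_bolas_esfera} gives exactly
$$
\frac{1}{|B_a|}\int_{q\in B(p_0,a)}\ln\|p-q\|\,dq=\ln\|p-p_0\|-\frac12-\cot^2\frac a2\ln\cos\frac a2,
$$
so the asserted inequality holds as an equality; this disposes of the equality clause.

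If instead $p\in B(p_0,a)$, then $0<t<2$, and part ii) of Proposition \ref{int_bolas_esfera} allows me to replace the left--hand side by the explicit expression given there. Moving every term to one side and writing $\cot^2\frac a2\ln\cos\frac a2=\tfrac12\cot^2\frac a2\cdot2\ln\cos\frac a2$, the desired inequality becomes
$$
\ln2+\ln\sin\frac a2+\frac{\cot^2\frac a2}{2}\Bigl(2\ln\cos\frac a2-\ln\bigl(1-\tfrac{t^2}{4}\bigr)\Bigr)-\ln t\ \geq\ 0,
$$
which is precisely the inequality $F(t,a)\geq 0$ of Lemma \ref{aux_cota_bolasnodisj}. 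That finishes the proof.

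I expect no serious obstacle here: the only point requiring care is to carry out the last rearrangement accurately so that it reproduces the function $F$ of Lemma \ref{aux_cota_bolasnodisj} term by term, and to note that the parameter ranges $t\in(0,2)$ and $a\in(0,\pi)$ needed to invoke that lemma are automatic once $p\in B(p_0,a)\setminus\{p_0\}$. All the genuine work has been packaged into Proposition \ref{int_bolas_esfera} and Lemma \ref{aux_cota_bolasnodisj}.
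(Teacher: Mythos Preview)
Your argument is correct and is exactly the approach the paper takes: the paper's proof reads ``Immediate from Proposition \ref{int_bolas_esfera} and Lemma \ref{aux_cota_bolasnodisj},'' and you have simply unpacked this by splitting into the two cases and verifying that in the interior case the difference of the two sides is precisely $F(t,a)$. Your handling of the edge cases $p=p_0$ and $a\geq\pi$ is a nice addition beyond what the paper states explicitly.
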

\begin{proof}
  Immediate from Proposition \ref{int_bolas_esfera} and Lemma \ref{aux_cota_bolasnodisj}.
\end{proof}

\subsection{Proof of the lower bound of $C_{\log}$}\label{sec:alternativeproofL}

Let $p_1,\ldots, p_N\in \mathbb{S}^2$, and let
\begin{align*}
I(p_1,\ldots,p_N)=& \mathcal{E}(p_1, \ldots, p_N)\\
&+\frac{2N}{\text{vol}(\mathbb{S}^2)}\sum_{i=1}^N\int_{q\in\mathbb{S}^2}\ln\|p_i-q\|dq-\frac{N^2}{(\text{vol}(\mathbb{S}^2))^2}\int_{p,q\in\mathbb{S}^2}\ln \|p-q\|dpdq.
\end{align*}
For any $p\in\mathbb{S}^2$ we have
\begin{equation}\label{valor_esp_pot_log_S2}
\frac{1}{\text{vol}(\mathbb{S}^2)}\int_{q\in\mathbb{S}^2} \ln\|p-q\|dq=\ln 2 -\frac{1}{2}=-\kappa,
\end{equation}
and hence we immediately deduce that $I(p_1,\ldots,p_N)=\mathcal{E}(p_1, \ldots, p_N)-\kappa N^2$. Define
\begin{align*}
U_{BB} &=-\frac{N^2}{\text{vol}(\mathbb{S}^2)^2}\int_{p,q\in\mathbb{S}^2}\ln\|p-q\|dpdq \stackrel{\eqref{valor_esp_pot_log_S2}}{=}\kappa N^2,\\
U_i & = \frac{2N}{\text{vol}(\mathbb{S}^2)}\int_{\mathbb{S}^2}\ln\|p_i-q\|dq \stackrel{\eqref{valor_esp_pot_log_S2}}{=}-2\kappa N,\\
U_{ij} & = -\ln \|p_i-p_j\|,\\
\widehat{U}_i & = \frac{2N}{\text{vol}(\mathbb{S}^2)|B_a|}\int_{B(p_i,a)}\int_{\mathbb{S}^2} \ln\|p-q\|dpdq \stackrel{\eqref{valor_esp_pot_log_S2}}{=}-2\kappa N,\\
\widehat{U}_{ij} & = -\frac{1}{|B_a|^2} \int_{B(p_i,a)}\int_{B(p_j,a)} \ln\|p-q\|dpdq.
\end{align*}
Clearly,
\begin{align*}
I(p_1,\ldots,p_N)=\underbrace{U_{BB}+\displaystyle\sum_{i=1}^N \widehat{U}_i+\displaystyle\sum_{i,j}\widehat{U}_{ij}}_{(\alpha)}+ \underbrace{\displaystyle\sum_{i=1}^N (U_i-\widehat{U}_i)}_{(\beta)}\underbrace{-\displaystyle\sum_{i=1}^N \widehat{U}_{ii}}_{(\gamma)}+\underbrace{\displaystyle\sum_{i\neq j}(U_{ij} -\widehat{U}_{ij})}_{(\delta)}.
\end{align*}
We proceed to lower bound and, when possible, to calculate exactly, the terms $\alpha, \beta,  \gamma$ and $\delta$. From Proposition \ref{positividad_alpha}, and the fact that the logarithmic potential is the Green function (except multiplicative and additive constant: see \eqref{eq:GreenS2}) we deduce that $\alpha\geqslant 0$. Indeed, just take the following measure
$$
\mu(p)=\displaystyle\sum_{i=1}^N\frac{1}{|B_a|}\chi_{B(p_i,a)}(p)- \frac{N}{\text{vol}(\mathbb{S}^2)},
$$
(note that  $\int_{\mathbb{S}^2}\mu(p)dp=0$) where $\chi_A$ denotes the characteristic function of a subset $A\subset \mathbb{S}^2$, and note that
$$
\alpha = - \int_{\mathbb{S}^2}\int_{\mathbb{S}^2} \ln\|p-q\|d\mu(p)d\mu(q).
$$
\noindent Moreover, we obviously have $\beta=0$. The explicit expression of $\gamma$ is obtained immediately using Proposition \ref{int_bolas_esfera}, since the value of $\hat U_{ii}$ does not depend on the point $p_i$:
\begin{align*}
\gamma=&\frac{N}{|B_a|^2} \int_{B(p_1,a)}\int_{B(p_1,a)} \ln\|p-q\|dpdq\\
=&\frac{N}{|B_a|^2}\int_{B(p_1,a)} \left(\int_{\S^2}\ln\|p-q\|dp-\int_{B(-p_1,\pi-a)} \ln\|p-q\|dp\right)dq
\\
=&-\frac{4\pi N\kappa}{|B_a|}-\frac{N}{|B_a|^2}\int_{B(p_1,a)} \int_{B(-p_1,\pi-a)} \ln\|p-q\|dpdq
\\
=&-\frac{4\pi N\kappa}{|B_a|}-\frac{N(4\pi-|B_a|)}{|B_a|^2}\int_{B(p_1,a)} \left(
\ln\|-p_1-q\|-\frac12-\frac{\ln\cos\frac{\pi-a}{2}}{\tan^2 \frac{\pi-a}{2}}
\right)dq\\
=&-\frac{4\pi N\kappa}{|B_a|}+\frac{N(4\pi-|B_a|)}{|B_a|}\left(
\frac12+\frac{\ln\cos\frac{\pi-a}{2}}{\tan^2 \frac{\pi-a}{2}}
-\left(\ln2-\frac12-\cot^2\frac{a}{2}\ln\cos\frac{a}{2}\right)\right)
\\
=& N\left[-\kappa+\ln\sin\frac{a}{2}+\cot^2\frac{a}{2} \left(\frac12+\cot^2\frac{a}{2}\ln\cos \frac{a}{2}\right)\right].
\end{align*}

\noindent Finally, we bound $\delta$. Note that
$$
\delta=\displaystyle\sum_{i\neq j}(U_{ij}-\widehat{U}_{ij}),
$$
and applying Corollary \ref{cor:cotainferiormediaS2} twice we get for all $i,j$:
\begin{align*}
U_{ij}-\widehat{U}_{ij}\geq &-\ln\|p_i-p_j\|+\frac{1}{|B_a|}\int_{B(p_i,a)} \left(\ln\|p_j-q\|-\frac12-\cot^2\frac{a}{2}\ln\cos\frac{a}{2}\right)dq\\
\geq& -1-2\cot^2\frac{a}{2}\ln\cos\frac{a}{2}.
\end{align*}
(A similar inequality with less explicit constants has been given recently in \cite[Lemma 3.1]{MarzoMas}). Therefore, we have obtained that
\begin{multline*}
I(p_1,\ldots,p_N) \geqslant  N(N-1)\left(-1-2\cot^2\frac{a}{2}\ln\cos\frac{a}{2}\right)\\
 +N\left[\ln 2-\frac12+\ln\sin\frac{a}{2}+\cot^2\frac{a}{2}\left(\frac12+\cot^2\frac{a}{2} \ln\cos \frac{a}{2}\right)\right].
\end{multline*}
This lower bound is valid for any choice of $a$. Choosing $\sin^2\frac{a}{2}=C/N$ we have:
\begin{align*}
I(p_1,\ldots,p_N) \geqslant & N(N-1)\left(-1-\frac{N-C}{C} \ln\left(1-\frac{C}N\right)\right)-\frac12N\ln N\\
& +N\left[\ln 2-\frac12+\frac12\ln C+\frac{N-C}{C} \left(\frac12+\frac{N-C}{2C} \ln\left(1-\frac{C}N\right)\right)\right]\\
\geqslant & -\frac12N\ln N+N\frac{2\ln C-2C+4\ln 2-1}{4}-\frac{C^2}{2}+O\left(\frac{1}{N}\right),
\end{align*}
where we have used the elementary inequality
$$
-\frac{C}{N}-\frac{C^2}{2N^2}-\frac{C^3}{N^3}\leq \ln \left(1-\frac{C}{N}\right)\leq -\frac{C}{N}-\frac{C^2}{2N^2},\quad \forall N\geq 2C.
$$
Taking $C=1$ (which is the optimal value) we conclude that
$$
\mathcal{E}(p_1, \ldots, p_N)=\kappa N^2+I(p_1,\ldots,p_N)\geq \kappa N^2-\frac12N\ln N+N\left(\ln 2-\frac34\right)+o(N),
$$
proving $C_{\mathrm{log}}\geq\ln 2- \frac34$ as claimed.

\smallskip

\section{The Green function in $\mathbb{S}^n$}
How should we extend the logarithmic energy to the unit $n$--sphere $\S^n\subseteq\R^{n+1}$? In the literature it is quite frequent to study Riesz potentials and also to use the very same logarithmic potential in this context. However, in a general Riemannian manifold $\mathcal M$ the Green energy
$$
E_{\mathcal M}(p_1, \ldots, p_N)=\sum_{i\neq j}G(\mathcal M;p_i,p_j),
$$
where $G(\mathcal M;\cdot,\cdot)$ is the Green function in $\mathcal M$ associated to the Laplace--Beltrami operator, is a more natural choice since it does not depend on extrinsic quantities, and is attracting more attention in the last few years, see \cite{JuanCriadoRey}, \cite{Stefan}, \cite{Andersonetal}. It turns out that the Green function in $\S^2$ is
\begin{equation}\label{eq:GreenS2}
G(\S^2;p,q)=-\frac{1}{2\pi}\ln\|p-q\|-\frac{1}{4\pi}+\frac{\ln 2}{2\pi}.
\end{equation}
That is, up to multiplicative and additive constants, the Green energy in $\S^2$ is the logarithmic potential. We claim that the deep reason why the argument by Lauritsen can be performed directly in $\S^2$ as we have done in Section \ref{sec:alternativeproofL} is precisely this fact. Hence, an extension to the Green energy in $\S^n$ is possible.

We start by computing an explicit formula for the Green energy in $\S^n$. Let
$$
V_n=\text{vol}(\S^n)=\frac{2\pi^{\frac{n+1}2}}{\Gamma\left(\frac{n+1}2\right)}.
$$

\begin{prop}\label{prop:greenSn}
The Green function for $\mathbb{S}^n$ is $G(\mathbb{S}^n;p,q)=g(\|p-q\|)$, where
\begin{equation*}
g(t)=\frac{2}{nV_n}\displaystyle\sum_{k=0}^{\infty} \frac{(n)_k}{(k+1)\left(\frac{n}{2}+1\right)_k} \left[\left(1-\frac{t^2}{4}\right)^{k+1}- \frac{B\left(\frac{n}{2},\frac{n}{2}+k+1\right)}{B\left(\frac{n}{2},\frac{n}{2}\right)}\right].
\end{equation*}
(See \eqref{eq:pochhammer} for the definition of the Pochhammer symbol $(n)_k$).
\end{prop}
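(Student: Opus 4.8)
\emph{Proof plan.} I would verify that the stated $g(\|p-q\|)$ satisfies the two defining properties of the Green function of the Laplace--Beltrami operator on $\mathbb{S}^n$: in the distributional sense $-\Delta_p\,g(\|p-q\|)=\delta_q(p)-V_n^{-1}$, and $\int_{\mathbb{S}^n}g(\|p-q\|)\,dp=0$ (consistent with \eqref{eq:GreenS2} when $n=2$). By homogeneity of the sphere the Green function depends only on the geodesic distance $\theta=d_R(p,q)$, hence only on $\|p-q\|=2\sin\frac\theta2$, hence only on $w:=1-\tfrac14\|p-q\|^2=\tfrac12(1+\cos\theta)=\cos^2\frac\theta2\in[0,1]$, with $w=1$ precisely when $p=q$. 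I would first rewrite $g$: the bracket in its definition is $w^{k+1}$ minus a constant, and that constant is exactly the average of $w^{k+1}$ over $\mathbb{S}^n$ — indeed, pushing the normalized surface measure forward to $u=\langle p,q\rangle$ and substituting $u=2s-1$ gives $V_n^{-1}\int_{\mathbb{S}^n}w^{k+1}\,dp=B(\tfrac n2,\tfrac n2+k+1)/B(\tfrac n2,\tfrac n2)$. Therefore $g(\|p-q\|)=\tilde g(w)-V_n^{-1}\int_{\mathbb{S}^n}\tilde g$, where $\tilde g(w)=\tfrac{2}{nV_n}\sum_{k\ge0}\tfrac{(n)_k}{(k+1)(\frac n2+1)_k}\,w^{k+1}$; since adding a constant does not change the Laplacian and the subtraction makes the mean vanish, it suffices to prove $-\Delta_p\,\tilde g(w)=\delta_q(p)-V_n^{-1}$.

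The computational core is the identity (immediate from $1-u^2=4w(1-w)$, $u=2w-1$, and the fact that on functions of $u=\cos\theta$ the operator $\Delta_{\mathbb{S}^n}$ acts as $(1-u^2)\partial_u^2-nu\,\partial_u$)
$$\Delta_{\mathbb{S}^n}\,w^{k+1}=(k+1)\Big(k+\tfrac n2\Big)w^{k}-(k+1)(k+n)\,w^{k+1},\qquad k\ge0.$$
The series for $\tilde g$ and the one obtained by applying $\Delta$ termwise both have radius of convergence $1$ (ratio of consecutive coefficients $\to1$), so on $\{w<1\}=\mathbb{S}^n\setminus\{q\}$ they converge locally uniformly and termwise differentiation is legitimate. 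Writing $a_k=\tfrac{2}{nV_n}\tfrac{(n)_k}{(k+1)(\frac n2+1)_k}$ and collecting powers of $w$ in $\Delta\tilde g=\sum_k a_k\big[(k+1)(k+\tfrac n2)w^k-(k+1)(k+n)w^{k+1}\big]$, the coefficient of $w^{k+1}$ equals $a_{k+1}(k+2)(k+1+\tfrac n2)-a_k(k+1)(k+n)$, which vanishes because $(n)_{k+1}/(n)_k=n+k$ and $(\tfrac n2+1)_{k+1}/(\tfrac n2+1)_k=\tfrac n2+1+k$ force $a_{k+1}(k+2)(k+1+\tfrac n2)=a_k(k+1)(k+n)$; the only surviving term is the constant $\tfrac n2 a_0=\tfrac{2}{nV_n}\cdot\tfrac n2=V_n^{-1}$. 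Hence $\Delta_{\mathbb{S}^n}\tilde g\equiv V_n^{-1}$ on $\mathbb{S}^n\setminus\{q\}$.

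It remains to control $\tilde g$ near $q$, i.e. as $w\to1^-$. Since $(n)_k/(\tfrac n2+1)_k\sim\tfrac{\Gamma(n/2+1)}{\Gamma(n)}k^{\,n/2-1}$, one gets $a_k\sim\tfrac{2\Gamma(n/2+1)}{nV_n\Gamma(n)}k^{\,n/2-2}$, so a Tauberian estimate gives $\tilde g(w)\sim c_n(1-w)^{1-n/2}$ for $n\ge3$ and $\tilde g(w)\sim -c_2\ln(1-w)$ for $n=2$; since $1-w=\sin^2\frac\theta2$ this is the expected fundamental-solution singularity $d_R(p,q)^{2-n}$ (respectively $|\ln d_R(p,q)|$), in particular $\tilde g\in L^1(\mathbb{S}^n)$ so $\Delta\tilde g$ is a well-defined distribution. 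Then I would apply Green's second identity to $\tilde g$ and an arbitrary $\phi\in C^\infty(\mathbb{S}^n)$ on $\mathbb{S}^n\setminus B(q,\varepsilon)$ and let $\varepsilon\to0$: the interior contribution tends to $\int_{\mathbb{S}^n}\tilde g\,\Delta\phi-V_n^{-1}\int_{\mathbb{S}^n}\phi$ (using $\Delta\tilde g\equiv V_n^{-1}$ off $q$ and $\tilde g\in L^1$), the term $\int_{\partial B(q,\varepsilon)}\tilde g\,\partial_\nu\phi$ is $O(\varepsilon)$ (or $O(\varepsilon|\ln\varepsilon|)$) and drops out, and $\int_{\partial B(q,\varepsilon)}\phi\,\partial_\nu\tilde g\to\phi(q)\lim_{\varepsilon\to0}\int_{\partial B(q,\varepsilon)}\partial_\nu\tilde g$. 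The single step that needs genuine care is to check that this last limit equals $1$: inserting the leading asymptotics of $\tilde g$ and $\mathrm{vol}(\partial B(q,\varepsilon))\sim\mathrm{vol}(\mathbb{S}^{n-1})\varepsilon^{n-1}$, the claim collapses to the identity $\tfrac{2^{n}\,\Gamma(n/2)\,\Gamma((n+1)/2)}{2\sqrt\pi\,\Gamma(n)}=1$, which is exactly the Legendre duplication formula. This yields $-\Delta\tilde g=\delta_q-V_n^{-1}$ in $\mathcal D'(\mathbb{S}^n)$, and subtracting the mean of $\tilde g$ turns it into $g(\|p-q\|)$ without affecting the Laplacian while enforcing the normalization — so $g(\|p-q\|)$ is the Green function. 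I expect essentially all the friction to be bookkeeping: the good substitution is $w=1-t^2/4$, which keeps $\Delta_{\mathbb{S}^n}w^{k+1}$ within a two-term span of monomials, and then the flux constant must be tracked through its Gamma factors; the cancellation producing $\Delta_{\mathbb{S}^n}\tilde g=V_n^{-1}$ is automatic once $\Delta_{\mathbb{S}^n}w^{k+1}$ is known.
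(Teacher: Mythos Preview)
Your proof is correct and takes a genuinely different route from the paper's. The paper \emph{derives} the formula: it invokes the general description of Green functions on compact harmonic manifolds (Appendix~\ref{AlgoritmoFGreen}, following \cite{BeltranCorralCriado2019}) to write down $\phi'(r)$, recognizes the resulting integrand as $\frac{1}{nV_n}\sin s\,{}_2F_1(1,n;\tfrac n2+1;\cos^2\tfrac s2)$, integrates the hypergeometric series term by term, and finally fixes the additive constant $C$ by computing the mean via stereographic projection. You instead \emph{verify} the formula: after peeling off the normalizing constant (your identification of $B(\tfrac n2,\tfrac n2+k+1)/B(\tfrac n2,\tfrac n2)$ as the spherical mean of $w^{k+1}$ is exactly right), the two-term relation $\Delta_{\mathbb S^n}w^{k+1}=(k+1)(k+\tfrac n2)w^k-(k+1)(k+n)w^{k+1}$ makes the Laplacian of the series telescope to the constant $V_n^{-1}$, and the standard Green's-identity argument on $\mathbb S^n\setminus B(q,\varepsilon)$ supplies the $\delta$-mass. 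Your approach is more self-contained --- no harmonic-manifold machinery, no stereographic chart --- but of course it needs the formula as input, whereas the paper's integration explains where the coefficients $\frac{(n)_k}{(k+1)(\frac n2+1)_k}$ come from. One small simplification: rather than extracting the flux $\lim_{\varepsilon\to0}\int_{\partial B(q,\varepsilon)}\partial_\nu\tilde g=1$ from the leading asymptotics of $\tilde g$ (which, strictly speaking, needs the asymptotics of $\tilde g'$, not just of $\tilde g$; this is harmless here since $\tilde g'(w)=\frac{2}{nV_n}\,{}_2F_1(1,n;\tfrac n2+1;w)$ explicitly), you can get it for free by applying the divergence theorem on $\mathbb S^n\setminus B(q,\varepsilon)$ together with the already-proved identity $\Delta\tilde g\equiv V_n^{-1}$ there, which gives the flux as $1-|B_\varepsilon|/V_n\to1$ without any Tauberian input.
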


\begin{proof}
Following the method in Section \ref{AlgoritmoFGreen}, we have $G(\S^n;p,q)=\phi(\S^n;d_R(p,q))$,
\begin{equation*}
\phi(\mathbb{S}^n;r)=\int_r^{\pi}\frac{2^{n-1}B_{\cos^2\frac{s}{2}} \left(\frac{n}{2},\frac{n}{2}\right)}{V_n\sin^{n-1}s}ds+C,\text{ $C$ a constant.}
\end{equation*}
(We have also used Lemma \ref{lem:betas}). From \eqref{def_incompletebeta} and \cite[(9.131.1)]{GR15} we have
\begin{align*}
\phi(\mathbb{S}^n;r)=\frac{1}{nV_n}\int_r^{\pi} \sin s\, {}_2F_1\left(1,n;\frac{n}{2}+1;\cos^2\frac{s}{2}\right)ds+C.
\end{align*}
Using \eqref{Serie_hiper_Gauss}, the half--angle identities and the change of variables $t=\frac{1+\cos s}{2}$ we get
\begin{equation*}
\phi(\mathbb{S}^n;r)= \frac{1}{nV_n}\displaystyle \sum_{k=0}^{\infty}\frac{(n)_k}{\left(\frac{n}{2}+1\right)_k}\frac{1}{2^k(k+1)}(1+\cos r)^{k+1}+C.
\end{equation*}
It is immediate to check that $\|p-q\|^2=2(1-\langle p, q \rangle)$ and $r=\arccos\langle p, q \rangle$. Hence,
$$
(1+\cos r)^{k+1}=2^{k+1}\left(1-\frac{\|p-q\|^2}{4}\right)^{k+1}, \quad p,q\in \mathbb{S}^n,
$$
and we thus get
\begin{equation*}
G(\mathbb{S}^n;p,q)=\frac{2}{nV_n}\displaystyle\sum_{k=0}^{\infty}\frac{(n)_k}{(k+1)\left(\frac{n}{2}+1\right)_k}\left(1-\frac{\|p-q\|^2}{4}\right)^{k+1}+C.
\end{equation*}
It remains to compute $C$ such that $G(\mathbb{S}^n;p,\cdot)$ has zero mean in $\mathbb{S}^n$. Without loss of generality we take $p=(0,\ldots,0,-1)\in\S^n$. Let $\varphi_{\mathbb{S}^n\backslash N}$ be the parameterization of $\mathbb{S}^n$ given by the inverse stereographic projection:
\begin{equation*}
\begin{array}{cccc}
\varphi:&\mathbb{R}^n & \to & \mathbb{S}^n\backslash (0,\ldots,0,1) \\
&z=\begin{pmatrix}
z_1, & \cdots, z_n
\end{pmatrix}
& \to &
\begin{pmatrix}
\displaystyle\frac{2z_1}{1+\|z\|^2}, & \cdots, & \displaystyle\frac{2z_n}{1+\|z\|^2}, & \displaystyle\frac{\|z\|^2-1}{1+\|z\|^2}
\end{pmatrix},
\end{array}
\end{equation*}
whose Jacobian equals $2^n/(1+\|z\|^2)^n$.  Then
$$
\|\varphi(0)-\varphi(z)\|^2=\frac{4\|z\|^2}{1+\|z\|^2},
$$
and so we have
\begin{multline*}
\frac{1}{V_n}\int_{\S^n}G(\mathbb{S}^n;p,q)dq\\
=C+\int_{\mathbb{R}^n}\frac{2}{nV_n^2}\displaystyle\sum_{k=0}^{\infty}\frac{(n)_k}{(k+1) \left(\frac{n}{2}+1\right)_k}\left(\frac{1}{1+\|z\|^2}\right)^{k+1} \frac{2^n}{(1+\|z\|^2)^{n}}dz,
\end{multline*}
which equals $0$ if and only if
$$
C=-\frac{2^{n+1}}{nV_n^2}\int_{\mathbb{R}^n}\displaystyle\sum_{k=0}^{\infty}\frac{(n)_k}{(k+1) \left(\frac{n}{2}+1\right)_k}\left(\frac{1}{1+\|z\|^2}\right)^{n+k+1}dz.
$$
It suffices to exchange the sum with the integral and pass to polar coordinates, recalling the definition of the beta function and the fact that
$$
V_n=2^{n-1}V_{n-1}B\left(\frac{n}{2},\frac{n}{2}\right),
$$
to conclude the result.
\end{proof}

\section{A lower bound for the Green energy on $\mathbb{S}^n$}
Recall that given $p_1,\ldots,p_N\in\S^n$, their Green energy is $E_{\mathbb{S}^n}(p_1, \ldots, p_N)=\sum_{i\neq j}G(\S^n;p_i,p_j)$.
The following quantity will play the role of the term ``$-1/2-\cot^2(a/2)\ln\cos (a/2)$'' in Proposition \ref{int_bolas_esfera}:
    $$K(\S^n,a)=\frac{2}{nV_n}\displaystyle\sum_{k=0}^{\infty} \frac{(n)_k}{\left(\frac{n}{2}+1\right)_k(k+1)}\frac{ B_{\sin^2\frac{a}{2}}(\frac{n}{2}+k+1,\frac{n}{2})}{B_{\sin^2\frac{a}{2}}(\frac{n}{2}, \frac{n}{2})},\quad a\in(0,\pi).$$
    Note that, from Proposition \ref{prop:greenSn},
    \begin{equation}\label{eq:GyK}
    G(\mathbb{S}^n;p,-p)=g(2)=-K(\mathbb{S}^n,\pi),\quad\forall p\in\mathbb{S}^n.
    \end{equation}
We state some basic properties of $K(\S^n,a)$. The first one just shows that $K(\S^2,a)$ equals $-1/2-\cot^2(a/2)\ln\cos (a/2)$ up to the multiplicative constant $-1/(2\pi)$, which makes sense in the light of \eqref{eq:GreenS2}.
\begin{lemma}\label{lem:Kabasic}
We have $K(\S^2,a)=-1/(2\pi)(-1/2-\cot^2(a/2)\ln\cos (a/2))$ for $a\in(0,\pi)$. Moreover,
    \begin{align*}
      K(\S^n,a)=&\frac{a^2}{(2n+4)V_n}+o(a^2),\\
      K(\S^n,\pi-a)=&K(\S^n,\pi)-\frac{a^2}{(2n-4)V_n}+o(a^2).
    \end{align*}
\end{lemma}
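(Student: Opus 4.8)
\emph{Plan.} The three statements are unified by first collapsing the defining series into a single integral. Put $c_k=\frac{(n)_k}{(\frac n2+1)_k(k+1)}$, $\beta(x)=B_x(\frac n2,\frac n2)=\int_0^x(t(1-t))^{\frac n2-1}\,dt$ and $\Psi(t)=\sum_{k\ge 0}c_kt^k$; since $c_k$ grows at most polynomially in $k$, $\Psi$ is analytic on $(-1,1)$. Inserting $B_x(\frac n2+k+1,\frac n2)=\int_0^x t^{\frac n2+k}(1-t)^{\frac n2-1}\,dt$ into the definition of $K(\S^n,a)$ and exchanging sum and integral (justified by Tonelli, all summands being nonnegative), one gets for $a\in(0,\pi)$, $x:=\sin^2\frac a2$,
\[
K(\S^n,a)=\frac{2}{nV_n\,\beta(x)}\int_0^x t^{n/2}(1-t)^{\frac n2-1}\Psi(t)\,dt,\qquad
t\,\Psi(t)=\frac n2\int_0^t\frac{\beta(v)}{(v(1-v))^{n/2}}\,dv,
\]
the second identity because $(1)_k=k!$ forces $t\Psi(t)=\int_0^t{}_2F_1(1,n;\tfrac n2+1;v)\,dv$, combined with ${}_2F_1(1,n;\tfrac n2+1;v)=\frac{n\beta(v)}{2(v(1-v))^{n/2}}$, which is exactly what equating the two expressions for the integrand of $\phi(\S^n;\cdot)$ in the proof of Proposition \ref{prop:greenSn} yields after $\sin^2 s=4\cos^2\frac s2\sin^2\frac s2$. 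This representation is the whole point: a naive termwise asymptotic expansion of the series fails for the last assertion, because $\sum_k c_k=\Psi(1^-)=+\infty$ whenever $n\ge 2$.

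The first assertion is then immediate: for $n=2$ one has $(\frac n2+1)_k=(n)_k$, so $c_k\equiv 1$, $\beta(x)=x$, $B_x(k+2,1)=\frac{x^{k+2}}{k+2}$, and
\[
K(\S^2,a)=\frac{1}{4\pi x}\sum_{k\ge 0}\frac{x^{k+2}}{(k+1)(k+2)}=\frac{1}{4\pi}\left(1+\Big(\tfrac1x-1\Big)\ln(1-x)\right)
\]
by partial fractions and $\sum_{m\ge 1}\frac{z^m}{m}=-\ln(1-z)$; substituting $x=\sin^2\frac a2$, $1-x=\cos^2\frac a2$, $\frac1x-1=\cot^2\frac a2$ gives $-\frac{1}{2\pi}\big(-\frac12-\cot^2\frac a2\ln\cos\frac a2\big)$. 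For the expansion as $a\to 0^+$, $x=\sin^2\frac a2=\frac{a^2}{4}+O(a^4)$ tends to $0$; since $\Psi(t)=1+O(t)$ and $(1-t)^{\frac n2-1}=1+O(t)$ near $t=0$, one has $\int_0^x t^{n/2}(1-t)^{\frac n2-1}\Psi(t)\,dt=\frac{x^{n/2+1}}{n/2+1}(1+O(x))$ and $\beta(x)=\frac{x^{n/2}}{n/2}(1+O(x))$, hence $K(\S^n,a)=\frac{2}{nV_n}\cdot\frac{n/2}{n/2+1}\,x+O(x^2)=\frac{2x}{(n+2)V_n}+O(x^2)=\frac{a^2}{(2n+4)V_n}+o(a^2)$.

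The remaining (and only delicate) case is $K(\S^n,\pi-a)$ as $a\to 0^+$, where I would take $n\ge 3$ so that $\frac{1}{2n-4}$ makes sense (for $n=2$ one instead gets an $a^2\ln a$ term, in accordance with the coefficient blowing up). Now the argument of $K$ is $x=\sin^2\frac{\pi-a}{2}=\cos^2\frac a2=1-\varepsilon$ with $\varepsilon:=\sin^2\frac a2=\frac{a^2}{4}+O(a^4)$. By the second identity above the integrand in the representation of $K$ equals
\[
f(t):=t^{n/2}(1-t)^{\frac n2-1}\Psi(t)=\frac n2\,t^{\frac n2-1}(1-t)^{\frac n2-1}\int_0^t\frac{\beta(v)}{(v(1-v))^{n/2}}\,dv,
\]
continuous on $(0,1)$; and since $\beta(v)\to\beta(1)=B(\frac n2,\frac n2)$ while the integrand $\beta(v)(v(1-v))^{-n/2}$ behaves like $\beta(1)(1-v)^{-n/2}$ near $v=1$, for $n\ge 3$ we obtain $\int_0^t\frac{\beta(v)}{(v(1-v))^{n/2}}\,dv\sim\frac{2\beta(1)}{n-2}(1-t)^{1-n/2}$, whence $f$ extends continuously to $t=1$ with $f(1)=L:=\frac{n\beta(1)}{n-2}$. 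Therefore $\int_{1-\varepsilon}^1 f(t)\,dt=L\varepsilon+o(\varepsilon)$, and since $\beta(x)K(\S^n,a)=\frac{2}{nV_n}\int_0^x f$ for all $x$ (the identity persists at $x=1$, e.g. by monotone convergence in the series defining $K$),
\[
\beta(1)K(\S^n,\pi)-\beta(1-\varepsilon)K(\S^n,\pi-a)=\frac{2}{nV_n}\int_{1-\varepsilon}^{1}f(t)\,dt=\frac{2L}{nV_n}\,\varepsilon+o(\varepsilon).
\]
Finally $\beta(1)-\beta(1-\varepsilon)=\int_{1-\varepsilon}^1(t(1-t))^{\frac n2-1}\,dt=O(\varepsilon^{n/2})=o(\varepsilon)$ (again using $n\ge 3$) and $K(\S^n,\pi-a)$ stays bounded, so one may replace $\beta(1-\varepsilon)$ by $\beta(1)$ with an $o(\varepsilon)$ error and divide by $\beta(1)$:
\[
K(\S^n,\pi)-K(\S^n,\pi-a)=\frac{2L}{nV_n\,\beta(1)}\,\varepsilon+o(\varepsilon)=\frac{2\varepsilon}{(n-2)V_n}+o(\varepsilon)=\frac{a^2}{(2n-4)V_n}+o(a^2),
\]
which rearranges to the claimed formula.

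The main obstacle is exactly this last step: one must resist the divergent termwise expansion and instead identify the singularity of $\Psi$ at $t=1$ — equivalently the endpoint blow-up of $\int_0^t\beta(v)(v(1-v))^{-n/2}\,dv$ — accurately enough to evaluate $\int_{1-\varepsilon}^1 f$ to first order in $\varepsilon$; the bookkeeping between $\beta(1)$ and $\beta(1-\varepsilon)$ and the role of the hypothesis $n\ge 3$ are then routine, and so are the first two assertions.
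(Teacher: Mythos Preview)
Your proof is correct. The first two assertions are handled essentially as in the paper (the $n=2$ closed form via the logarithmic series, and the small-$a$ expansion by isolating the $k=0$ term), but for the delicate third assertion your route diverges from the paper's and is, in my view, cleaner.

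The paper treats $K(\S^n,\pi)-K(\S^n,\pi-a)$ by invoking the symmetry $B(\alpha,\beta)=B_x(\alpha,\beta)+B_{1-x}(\beta,\alpha)$ to split the limit into $L_1(n)+L_2(n)$; it then disposes of $L_1$ via an elementary bound on $B_s(\alpha,\beta)$ (Lemma~\ref{cota_beta}) and evaluates $L_2$ by a change of variables, dominated convergence, and a separately proved Abelian-type asymptotic $\lim_{s\to0}s^{n/2-1}S_n(1-s)=\Gamma(\tfrac n2+1)\Gamma(\tfrac n2-1)/\Gamma(n)$ (Lemma~\ref{lem:asyntotic}), the latter established through integral comparison of $\sum k^p s^{k+1}$ (Lemma~\ref{exp_sum_cota}). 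You instead fold the entire series into the single integral $\beta(x)K(\S^n,a)=\frac{2}{nV_n}\int_0^x f$ and use the identity ${}_2F_1(1,n;\tfrac n2+1;v)=\tfrac{n}{2}\beta(v)(v(1-v))^{-n/2}$, already present in the proof of Proposition~\ref{prop:greenSn}, to show that the integrand $f$ extends continuously to $t=1$ with the value $n\beta(1)/(n-2)$. This replaces the two auxiliary lemmas of Appendix~C by a one-line L'H\^opital (or direct) computation of the endpoint behaviour, and the remaining bookkeeping with $\beta(1-\varepsilon)$ versus $\beta(1)$ is exactly as you describe. What the paper's approach buys is that it keeps everything at the level of the coefficients $c_k$ and never needs the hypergeometric closed form; what yours buys is avoiding the Abel--Tauber-type analysis of $S_n$ altogether.
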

\begin{proof}
  See Section \ref{sec:LemaKbasico}
\end{proof}

Our main new result is the following lower bound for the Green energy on $\S^n$. Note that similar results are known for the Green energy in projective spaces \cite{Andersonetal}.
\begin{theorem}[Main result]\label{th:main}
Let $p_1,\ldots, p_N$ be $N$ points in $\mathbb{S}^n$ with $n\geq3$. Then,
\begin{align*}
E_{\mathbb{S}^n}(p_1, \ldots, p_N)\geq-\frac{n^{1+2/n}}{(n^2-4)V_n^{1-2/n}V_{n-1}^{2/n}}N^{2-2/n} +o(N^{2-2/n}),
\end{align*}
where, recall, $V_n$ is the volume of $\mathbb{S}^n$.
\end{theorem}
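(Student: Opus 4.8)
The plan is to transplant the argument of Section~\ref{sec:alternativeproofL} almost verbatim, with the logarithmic potential replaced by the Green function $G(\mathbb{S}^n;\cdot,\cdot)$ of Proposition~\ref{prop:greenSn}. The one genuinely new ingredient is a mean--value bound for $G(\mathbb{S}^n;\cdot,\cdot)$ over geodesic balls, playing the role of Proposition~\ref{int_bolas_esfera} and Corollary~\ref{cor:cotainferiormediaS2}: for every $p_0,p\in\mathbb{S}^n$ and every $a\in(0,\pi)$,
\begin{equation}\label{eq:meanvalueSn}
\frac{1}{|B_a|}\int_{B(p_0,a)}G(\mathbb{S}^n;p,q)\,dq\ \le\ G(\mathbb{S}^n;p,p_0)+K(\mathbb{S}^n,a),
\end{equation}
with equality whenever $p\notin B(p_0,a)$. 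I would obtain \eqref{eq:meanvalueSn} either by integrating the series of Proposition~\ref{prop:greenSn} term by term over $B(p_0,a)$ (as is done for $n=2$ in \cite[Proposition 3.2]{Beltran2013}, and which reproduces the series defining $K(\mathbb{S}^n,a)$), or, more conceptually, by a potential--theoretic argument: the function $h(p)=\frac{1}{|B_a|}\int_{B(p_0,a)}G(\mathbb{S}^n;p,q)\,dq-G(\mathbb{S}^n;p,p_0)$ is invariant under rotations fixing $p_0$ and satisfies $-\Delta h=\frac{1}{|B_a|}\chi_{B(p_0,a)}-\delta_{p_0}$, hence it is harmonic and smooth on $\mathbb{S}^n\setminus\overline{B(p_0,a)}$; since the only radial harmonic functions there that stay smooth at $-p_0$ are the constants, $h$ is constant (equal to $K(\mathbb{S}^n,a)$) on that set, while on $B(p_0,a)\setminus\{p_0\}$ the radial equation integrates to $\sin^{n-1}\theta\,h'(\theta)=\frac1{V_{n-1}}\bigl(1-|B_\theta|/|B_a|\bigr)\ge0$ (with $\theta=d_R(\cdot,p_0)$), so $h$ is nondecreasing in $\theta$ and $h\le h(a)=K(\mathbb{S}^n,a)$ there as well.

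Granting \eqref{eq:meanvalueSn}, the rest goes exactly as before, and is in fact simpler because $G(\mathbb{S}^n;p,\cdot)$ has zero mean over $\mathbb{S}^n$: the analogues of the terms $U_{BB},U_i,\widehat U_i$ all vanish, so that what was $I(p_1,\dots,p_N)$ is now simply $E_{\mathbb{S}^n}(p_1,\dots,p_N)$. With $\mu=\sum_{i=1}^N\frac1{|B_a|}\chi_{B(p_i,a)}-\frac N{V_n}$ (which has zero total mass) and $\widehat E_{ij}=\frac1{|B_a|^2}\int_{B(p_i,a)}\int_{B(p_j,a)}G(\mathbb{S}^n;p,q)\,dp\,dq$, positive semidefiniteness of the Green kernel on zero--mean measures --- if $-\Delta u=\mu$ then $\iint G\,d\mu\,d\mu=\int|\nabla u|^2\ge0$, which is the general form of the fact invoked through Proposition~\ref{positividad_alpha} --- together with the vanishing of $\int_{\mathbb{S}^n}G(\mathbb{S}^n;p,q)\,dq$ yields
\[
0\ \le\ \iint_{\mathbb{S}^n\times\mathbb{S}^n} G(\mathbb{S}^n;p,q)\,d\mu(p)\,d\mu(q)\ =\ \sum_{i\ne j}\widehat E_{ij}+N\,\widehat E_{11},
\]
i.e. $\sum_{i\ne j}\widehat E_{ij}\ge -N\widehat E_{11}$. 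Applying \eqref{eq:meanvalueSn} twice gives $\widehat E_{ij}\le G(\mathbb{S}^n;p_i,p_j)+2K(\mathbb{S}^n,a)$ for $i\ne j$, hence
\[
E_{\mathbb{S}^n}(p_1,\dots,p_N)=\sum_{i\ne j}G(\mathbb{S}^n;p_i,p_j)\ \ge\ \sum_{i\ne j}\widehat E_{ij}-2N(N-1)K(\mathbb{S}^n,a)\ \ge\ -N\widehat E_{11}-2N(N-1)K(\mathbb{S}^n,a).
\]

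It remains to evaluate $\widehat E_{11}$ (the counterpart of the term $\gamma$) and to optimise in $a$. Exactly as in Section~\ref{sec:alternativeproofL}, the complement of $B(p_1,a)$ is $B(-p_1,\pi-a)$, and using $\int_{\mathbb{S}^n}G(\mathbb{S}^n;p,q)\,dp=0$, the equality case of \eqref{eq:meanvalueSn} (each time the relevant pole lying outside the ball averaged over), and $G(\mathbb{S}^n;p_1,-p_1)=-K(\mathbb{S}^n,\pi)$ from \eqref{eq:GyK}, one gets
\[
\widehat E_{11}=-\frac{V_n-|B_a|}{|B_a|}\Bigl(K(\mathbb{S}^n,a)+K(\mathbb{S}^n,\pi-a)-K(\mathbb{S}^n,\pi)\Bigr).
\]
This bound on $E_{\mathbb{S}^n}$ holds for every $a\in(0,\pi)$; I would then choose $a=a(N)$ with $|B_a|\sim V_n/N$, equivalently $a^n\sim nV_n/(V_{n-1}N)$, and substitute $|B_a|=\frac{V_{n-1}}{n}a^n(1+O(a^2))$ together with the expansions of Lemma~\ref{lem:Kabasic}. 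Since $\frac1{2n+4}-\frac1{2n-4}=-\frac2{n^2-4}$ this gives $-N\widehat E_{11}\sim-\frac{2n}{(n^2-4)V_{n-1}}Na^{2-n}$ and $-2N(N-1)K(\mathbb{S}^n,a)\sim-\frac1{(n+2)V_n}N^2a^2$, both of order $N^{2-2/n}$ for the chosen $a$; adding them and using $\frac2{n^2-4}+\frac1{n+2}=\frac n{n^2-4}$ to combine the two contributions produces exactly $-\frac{n^{1+2/n}}{(n^2-4)V_n^{1-2/n}V_{n-1}^{2/n}}N^{2-2/n}+o(N^{2-2/n})$. The hypothesis $n\ge3$ is used only to keep $2n-4$ and $n^2-4$ positive (so that Lemma~\ref{lem:Kabasic} applies and the constant is finite), the case $n=2$ being the logarithmic one already treated.

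I expect the main obstacle to be the mean--value inequality \eqref{eq:meanvalueSn}, in particular the case $p\in B(p_0,a)$: the potential--theoretic route above is clean but one must handle with care the behaviour of $G(\mathbb{S}^n;\cdot,p_0)$ at $p_0$ (for $n\ge3$ it blows up like $d_R^{\,2-n}$) when reading off the flux condition that fixes the sign of $h'$, while the term--by--term route requires evaluating a family of incomplete--beta integrals and matching them to the series defining $K(\mathbb{S}^n,a)$. A secondary, purely bookkeeping difficulty is to check that every error term --- the $O(a^2)$ relative error in $|B_a|$, the $o(a^2)$ remainders in Lemma~\ref{lem:Kabasic}, and the gap between $N(N-1)$ and $N^2$ --- is genuinely $o(N^{2-2/n})$ once $a\sim N^{-1/n}$; this holds, but it is the point at which a careless choice of the scaling of $a$ would break the estimate.
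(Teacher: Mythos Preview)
Your proposal is correct and follows essentially the same route as the paper: the same decomposition into $\alpha,\beta,\gamma,\delta$, the same positivity argument for $\alpha$ via Proposition~\ref{positividad_alpha}, the same computation of $\gamma=-N\widehat E_{11}$ via the complement ball and \eqref{eq:GyK}, the same double application of the mean--value bound to control $\delta$, and the same optimization in $a$ (your choice $a^n\sim nV_n/(V_{n-1}N)$ is exactly the paper's optimal $C=(nV_n/V_{n-1})^{2/n}$).

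The one genuine difference is in how you propose to prove the inequality case of \eqref{eq:meanvalueSn}. The paper establishes the equality case for $p\notin B(p_0,a)$ by the same harmonicity reduction you sketch (reducing to $p=-p_0$ via Willmore's mean--value theorem, then integrating the series of Proposition~\ref{prop:greenSn} term by term); but for $p\in B(p_0,a)$ the paper instead writes down an explicit expression (Proposition~\ref{mediaGreenBola}, item ii)) and then verifies the inequality by a direct series computation (Lemma~\ref{aux_cota_bolasnodisj_Sn}, proved as Proposition~\ref{cota_para_delta}) showing that a certain bivariate series $F(s,\alpha)$ is nonnegative via $\partial_s F(s,s)\equiv0$ and $\lim_{s\to0}F(s,s)=0$. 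Your radial--ODE argument, integrating $\Delta h=\frac{1}{|B_a|}\chi_{B(p_0,a)}-\delta_{p_0}$ over $B(p_0,\theta)$ to get $\sin^{n-1}\theta\,h'(\theta)=\frac{1}{V_{n-1}}(1-|B_\theta|/|B_a|)\ge0$, is a cleaner and more conceptual alternative that avoids all the incomplete--beta bookkeeping of Section~\ref{sec:proofoflema4.2}; it costs you only the care (which you flag) of handling the $d_R^{\,2-n}$ singularity of $G$ at $p_0$ when reading off the flux, whereas the paper's approach trades that analytic care for a longer but entirely elementary series manipulation.
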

The proof of Theorem \ref{th:main} proceeds exactly as in Section \ref{sec:alternativeproofL}, changing $\S^2$ to $\S^n$:
\begin{enumerate}[(A)]
\item We write the analogous to Proposition \ref{int_bolas_esfera}, Lemma \ref{aux_cota_bolasnodisj} and Corollary \ref{cor:cotainferiormediaS2}: as before, let $B(p_0,a)$ be the geodesic ball centered at $p_0\in\mathbb{S}^n$ with radius $a>0$ and let $|B_a|$ be the volume of this ball.

\begin{prop}[$\S^n$--analogous of Proposition \ref{int_bolas_esfera}]\label{mediaGreenBola}
Let $p_0, p\in\mathbb{S}^n$ and $a\in(0,\pi)$. Then, $|B_a|=2^{n-1}V_{n-1}B_{\sin^2\frac{a}{2}}\left(\frac{n}{2},\frac{n}{2}\right)$ and moreover
\begin{enumerate}[i)]
\item If $p\notin B(p_0,a)$:
$$
\frac{1}{|B_a|}\int_{q\in B(p_0,a)}G(\mathbb{S}^n; p, q)dq=G(\S^n,p,p_0)+K(\S^n,a).
$$
\item If $p\in B(p_0,a)$:
$$
\frac{1}{|B_a|}\int_{q\in B(p_0,a)} G(\mathbb{S}^n; p, q)dq=-\frac{B_{\cos^2\frac{a}{2}}\left(\frac{n}{2},\frac{n}{2}\right)}{B_{\sin^2\frac{a}{2}} \left(\frac{n}{2},\frac{n}{2}\right)}\left(G(\mathbb{S}^n; p, -p_0)+K(\S^n,\pi-a)\right).
$$
\end{enumerate}
\end{prop}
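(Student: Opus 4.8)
The plan is to run exactly the argument of Proposition \ref{int_bolas_esfera}, with the series of Proposition \ref{prop:greenSn} in place of the logarithm, using the single structural fact that makes that argument work: as a function of $q$, $G(\mathbb{S}^n;p,q)$ has zero mean on $\mathbb{S}^n$ and \emph{constant} Laplace--Beltrami on $\mathbb{S}^n\setminus\{p\}$, equal to $1/V_n$ (the defining property of the Green function, cf.\ Section \ref{AlgoritmoFGreen}). First I would record the volume formula: the geodesic sphere of radius $r$ in $\mathbb{S}^n$ is an $(n-1)$--sphere of radius $\sin r$, so its area is $A(r)=V_{n-1}\sin^{n-1}r$ and $|B_a|=\int_0^aA(r)\,dr=V_{n-1}\int_0^a\sin^{n-1}s\,ds$; the change of variables $t=\sin^2\frac s2$ turns this into $2^{n-1}V_{n-1}B_{\sin^2\frac a2}\!\left(\frac n2,\frac n2\right)$, and the same substitution gives $V_n=2^{n-1}V_{n-1}B\!\left(\frac n2,\frac n2\right)$.

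For part (i), assume first $d_R(p,p_0)>a$, so that $u(q):=G(\mathbb{S}^n;p,q)$ is smooth near $\overline{B(p_0,a)}$ with $\Delta u\equiv 1/V_n$ there. Since the area element of $\partial B(p_0,r)$ is $\sin^{n-1}r$ times the fixed measure of $\mathbb{S}^{n-1}$, the spherical mean $M(r):=\frac{1}{A(r)}\int_{\partial B(p_0,r)}u\,d\sigma$ satisfies $M(0)=u(p_0)=G(\mathbb{S}^n;p,p_0)$ and, by the divergence theorem, $A(r)M'(r)=\int_{\partial B(p_0,r)}\partial_\nu u\,d\sigma=\int_{B(p_0,r)}\Delta u=|B_r|/V_n$. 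The coarea formula (the distance function has unit gradient) and Fubini's theorem then give
\begin{align*}
\frac{1}{|B_a|}\int_{B(p_0,a)}u\,dq&=\frac{1}{|B_a|}\int_0^a A(r)M(r)\,dr\\
&=G(\mathbb{S}^n;p,p_0)+\frac{1}{V_n|B_a|}\int_0^a\frac{|B_r|\bigl(|B_a|-|B_r|\bigr)}{A(r)}\,dr .
\end{align*}
It then remains to identify the last integral with $K(\mathbb{S}^n,a)$: after the substitution $t=\sin^2\frac r2$ the integrand becomes a product of incomplete beta functions, and expanding $|B_r|$ (equivalently $B_t(\frac n2,\frac n2)$) as a Gauss hypergeometric series and integrating term by term reproduces the series defining $K(\mathbb{S}^n,a)$ — the same type of computation as in the proof of Proposition \ref{prop:greenSn}. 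The boundary case $d_R(p,p_0)=a$ follows by letting the radius increase to $a$, all quantities being continuous.

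Part (ii) is deduced from part (i) by the antipodal trick already used for the term $\gamma$ in Section \ref{sec:alternativeproofL}: the complement of $B(p_0,a)$ is $B(-p_0,\pi-a)$, and $p\in B(p_0,a)$ forces $p\notin B(-p_0,\pi-a)$; hence, using $\int_{\mathbb{S}^n}G(\mathbb{S}^n;p,q)\,dq=0$ and part (i) applied at the centre $-p_0$ with radius $\pi-a$,
\begin{align*}
\int_{B(p_0,a)}G(\mathbb{S}^n;p,q)\,dq&=-\int_{B(-p_0,\pi-a)}G(\mathbb{S}^n;p,q)\,dq\\
&=-|B_{\pi-a}|\bigl(G(\mathbb{S}^n;p,-p_0)+K(\mathbb{S}^n,\pi-a)\bigr).
\end{align*}
Dividing by $|B_a|$ and using $|B_{\pi-a}|=2^{n-1}V_{n-1}B_{\cos^2\frac a2}(\frac n2,\frac n2)$ together with $|B_a|=2^{n-1}V_{n-1}B_{\sin^2\frac a2}(\frac n2,\frac n2)$ yields the stated identity, the case $p=p_0$ being covered by continuity since the singularity of $G$ is integrable.

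The only real obstacle is the middle step of part (i): verifying that the explicit one--dimensional integral coincides with the series $K(\mathbb{S}^n,a)$. This is concrete but somewhat lengthy bookkeeping with incomplete beta and hypergeometric functions, entirely parallel to the proof of Proposition \ref{prop:greenSn}; conceptually, the PDE computation above is what forces the $p$--dependent ball averages of the individual terms $(1-\|p-q\|^2/4)^{k+1}$ to collapse into the single $p$--independent constant $K(\mathbb{S}^n,a)$.
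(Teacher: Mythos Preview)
Your argument is correct, and part (ii) coincides with the paper's. For part (i), however, you take a genuinely different route. The paper invokes Willmore's mean--value theorem \cite{Willmore} for harmonic functions: the difference $H(q)=G(\mathbb S^n;p,q)-G(\mathbb S^n;-p_0,q)$ is harmonic on $B(p_0,a)$, so its spherical means equal $H(p_0)$, which reduces the computation to the single case $p=-p_0$; there the series term $(1-\|p-q\|^2/4)^{k+1}$ becomes $\sin^{2k+2}(r/2)$ and integrates termwise into the incomplete betas defining $K(\mathbb S^n,a)$ with almost no effort. You instead derive the ODE $A(r)M'(r)=|B_r|/V_n$ for the spherical mean of the constant--Laplacian function $u$ and integrate, obtaining the manifestly $p$--independent expression $\tfrac{1}{V_n|B_a|}\int_0^a|B_r|(|B_a|-|B_r|)/A(r)\,dr$; identifying this with $K(\mathbb S^n,a)$ then costs a real computation (use $B_t(\tfrac n2,\tfrac n2)=\tfrac{2}{n}t^{n/2}(1-t)^{n/2}\,{}_2F_1(1,n;\tfrac n2+1;t)$ from \eqref{def_incompletebeta} and \cite[(9.131.1)]{GR15}, expand, and integrate $\int_0^\alpha(B_\alpha-B_t)\,t^k\,dt$ by parts to recover $B_\alpha(\tfrac n2+k+1,\tfrac n2)/(k+1)$). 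Your route is self--contained and makes the $p$--independence of the correction term transparent without citing Willmore; the paper's route outsources that step to \cite{Willmore} but makes the emergence of the series for $K$ immediate. One small quibble: with the paper's convention $\Delta=-\mathrm{div}\,\nabla$ (Definition \ref{Def_Energ_Green}) one has $\Delta_q G=\delta_p-1/V_n$, hence $\Delta u=-1/V_n$ off $p$, not $+1/V_n$; your divergence--theorem line still comes out right because you are tacitly using the analyst's sign, but this should be said explicitly.
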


\begin{proof}
See Section \ref{sec:proofofmediaGreenBola}.
\end{proof}

\begin{lemma}[$\S^n$--analogous of Lemma \ref{aux_cota_bolasnodisj}]\label{aux_cota_bolasnodisj_Sn}
The function $F:  [0,2]\times[0,\pi] \to  \mathbb{R}$ given by
\begin{equation*}
  F(t,\alpha)=  g(t)+K(\S^n,a)+ \frac{B_{\cos^2\frac{a}{2}}\left(\frac{n}{2},\frac{n}{2}\right)}{B_{\sin^2\frac{a}{2}} \left(\frac{n}{2},\frac{n}{2}\right)}\left(g\left(\sqrt{4-t^2}\right)+K(\S^n,\pi-a)\right),\end{equation*}
with $g(t)$ as in Proposition \ref{prop:greenSn}, is not negative.
\end{lemma}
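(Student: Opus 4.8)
The plan is to mimic the proof of Lemma~\ref{aux_cota_bolasnodisj}: fix the angle $a\in(0,\pi)$ and study $F(\cdot,a)$ as a one–variable function of $t\in(0,2)$. First I would record the boundary behaviour in $t$. The series in Proposition~\ref{prop:greenSn} shows that $g(t)\to+\infty$ as $t\to0^+$ (its $k$–th term is $\sim c\,k^{n/2-2}$ with $c>0$, and $\sum_k k^{n/2-2}$ diverges for $n\ge2$), so $g(t)$ blows up at $t=0$ and $g(\sqrt{4-t^2})$ blows up at $t=2$, whereas all the other terms of $F$ stay finite and the incomplete–beta prefactor $B_{\cos^2\frac{a}{2}}(\frac n2,\frac n2)/B_{\sin^2\frac{a}{2}}(\frac n2,\frac n2)$ is positive for $a\in(0,\pi)$. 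Hence $\lim_{t\to0^+}F(t,a)=\lim_{t\to2^-}F(t,a)=+\infty$, so $F(\cdot,a)$ attains its infimum at an interior critical point, and it is enough to locate the critical points and evaluate $F$ there.

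For the critical points I would differentiate in $t$. Since $(1)_k=k!$, the series of $g$ simplifies on differentiation to $g'(t)=-\frac{t}{nV_n}\,{}_2F_1(1,n;\frac n2+1;1-\frac{t^2}{4})$, and the chain rule gives $\frac{d}{dt}g(\sqrt{4-t^2})=\frac{t}{nV_n}\,{}_2F_1(1,n;\frac n2+1;\frac{t^2}{4})$. The key analytic input is the identity
$$
{}_2F_1\Big(1,n;\tfrac n2+1;x\Big)=\frac{n\,B_x\big(\tfrac n2,\tfrac n2\big)}{2\,\big(x(1-x)\big)^{n/2}},\qquad x\in(0,1),
$$
which falls out of equating the two integral representations of $\phi(\mathbb{S}^n;r)$ used in the proof of Proposition~\ref{prop:greenSn} (their integrands must coincide). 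With it, the ratio ${}_2F_1(1,n;\frac n2+1;\frac{t^2}{4})/{}_2F_1(1,n;\frac n2+1;1-\frac{t^2}{4})$ collapses to $B_{t^2/4}(\frac n2,\frac n2)/B_{1-t^2/4}(\frac n2,\frac n2)$, the $(x(1-x))^{n/2}$ prefactors cancelling. Therefore $\partial_tF(t,a)$ has the same sign as
$$
\frac{B_{\cos^2\frac{a}{2}}(\frac n2,\frac n2)}{B_{\sin^2\frac{a}{2}}(\frac n2,\frac n2)}\cdot\frac{B_{t^2/4}(\frac n2,\frac n2)}{B_{1-t^2/4}(\frac n2,\frac n2)}-1.
$$
Now $u\mapsto B_u(\frac n2,\frac n2)/B_{1-u}(\frac n2,\frac n2)$ is a strictly increasing bijection of $(0,1)$ onto $(0,\infty)$ (numerator increasing, denominator decreasing, both positive), so this expression is negative for small $t$ and positive for large $t$, with a single sign change at $t_*=2\sin\frac a2$ (where $\frac{t_*^2}{4}=\sin^2\frac a2$ and $1-\frac{t_*^2}{4}=\cos^2\frac a2$). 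Hence $F(\cdot,a)$ decreases then increases and its minimum on $(0,2)$ is at $t_*$.

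It remains to show $F(t_*,a)=0$, and here I would argue geometrically rather than by a direct series computation. The value $t_*=2\sin\frac a2$ is precisely $\|p-p_0\|$ when $d_R(p,p_0)=a$; then also $d_R(p,-p_0)=\pi-a$ and $\|p+p_0\|=2\sin\frac{\pi-a}{2}=2\cos\frac a2=\sqrt{4-t_*^2}$. Both formulas in Proposition~\ref{mediaGreenBola} compute the ball average $p\mapsto\frac1{|B_a|}\int_{B(p_0,a)}G(\mathbb{S}^n;p,q)\,dq$, which is the Green potential of a bounded density and hence continuous in $p$ across the sphere $\{d_R(p,p_0)=a\}$. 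Taking $p$ on that sphere, case i) gives $g(t_*)+K(\mathbb{S}^n,a)$, while letting $d_R(p,p_0)\to a^-$ in case ii) and invoking continuity gives $-\frac{B_{\cos^2\frac{a}{2}}(\frac n2,\frac n2)}{B_{\sin^2\frac{a}{2}}(\frac n2,\frac n2)}\big(g(\sqrt{4-t_*^2})+K(\mathbb{S}^n,\pi-a)\big)$; equating the two expressions is exactly $F(t_*,a)=0$. Finally, on the boundary of the rectangle $F$ is still nonnegative: at $a=\pi$ one has $F(t,\pi)=g(t)+K(\mathbb{S}^n,\pi)=g(t)-g(2)\ge0$ because $g'(t)=-\frac{t}{nV_n}\,{}_2F_1(1,n;\frac n2+1;1-\frac{t^2}{4})<0$ on $(0,2)$, and at $a=0$ or $t\in\{0,2\}$ one of the blow–ups above forces $F=+\infty$.

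The step I expect to be the main obstacle is proving $F(t_*,a)=0$ cleanly: the geometric route rests on Proposition~\ref{mediaGreenBola} together with the continuity of the ball average (both standard, but they must be available and not themselves rely on this lemma), whereas a purely computational proof would require manipulating incomplete–beta identities such as $B_x(p,q)+B_{1-x}(q,p)=B(p,q)$ term by term in the series for $g$ and $K$, which is considerably more delicate. The remaining ingredients — the derivative computation, the hypergeometric/incomplete-beta identity, and the monotonicity of $u\mapsto B_u(\frac n2,\frac n2)/B_{1-u}(\frac n2,\frac n2)$ — are routine.
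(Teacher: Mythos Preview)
Your argument is correct and is in spirit the direct $\mathbb{S}^n$ analogue of the proof of Lemma~\ref{aux_cota_bolasnodisj}: fix the angular variable, show the one-variable function in $t$ goes to $+\infty$ at the endpoints, locate the unique interior critical point $t_*=2\sin\frac{a}{2}$ via $g'(t)$, and check $F(t_*,a)=0$. The paper, however, takes a different route. After the change of variables $s=t^2/4$, $\alpha=\sin^2\frac{a}{2}$ (which also clears the denominator $B_{\sin^2\frac{a}{2}}(\frac n2,\frac n2)$), the function splits as a series $\sum_k c_k\,Q_k(s,\alpha)$, and the paper minimizes over $\alpha$ at fixed $s$ \emph{term by term}: each $Q_k$ has its minimum at $\alpha=s$. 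This lands on the same diagonal you find, but gives the extra structural information that every summand is individually minimized there. To show the diagonal value vanishes, the paper then computes $\partial_s F(s,s)=0$ (using essentially the same hypergeometric/incomplete-beta identity you use) and checks $\lim_{s\to0}F(s,s)=0$ by bounding four pieces separately. Your geometric alternative, matching the two cases of Proposition~\ref{mediaGreenBola} on the boundary sphere $d_R(p,p_0)=a$, is cleaner and perfectly legitimate here since that proposition is proved independently of the lemma. In fact you can sharpen it: both derivations in the proof of Proposition~\ref{mediaGreenBola} apply verbatim when $p$ lies on the boundary (the open balls $B(p_0,a)$ and $B(-p_0,\pi-a)$ still exclude $p$), so the two formulas agree there without any appeal to continuity of the potential.
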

\begin{proof}
  See Section \ref{sec:proofoflema4.2}.
\end{proof}
\begin{corollary}[$\S^n$--analogous of Corollary \ref{cor:cotainferiormediaS2}]\label{int_bolas_esfera_Sd}
Let $p_0, p\in\mathbb{S}^n$ and $a>0$. Then,
$$
\frac{1}{|B_a|}\int_{q\in B(p_0,a)} G(\S^n,p,q) dq\leq G(\S^n,p,p_0)+K(\S^n,a),
$$
with an equality if $p\notin B(p_0,a)$.
\end{corollary}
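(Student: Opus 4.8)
\noindent The plan is to reduce the statement to the two ingredients already at hand, Proposition \ref{mediaGreenBola} and Lemma \ref{aux_cota_bolasnodisj_Sn}, exactly as Corollary \ref{cor:cotainferiormediaS2} was deduced from Proposition \ref{int_bolas_esfera} and Lemma \ref{aux_cota_bolasnodisj}. First I would split into cases according to whether $p$ lies inside or outside the ball $B(p_0,a)$. If $p\notin B(p_0,a)$, part (i) of Proposition \ref{mediaGreenBola} gives the asserted relation as a genuine identity, so both the inequality and the ``equality if $p\notin B(p_0,a)$'' clause hold with nothing further to prove.

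Assume then $p\in B(p_0,a)$ and put $t=\|p-p_0\|$. The one substantive observation is the elementary identity, valid for points on the unit sphere,
$$
\|p-(-p_0)\|^2=\|p+p_0\|^2=2+2\langle p,p_0\rangle=4-\bigl(2-2\langle p,p_0\rangle\bigr)=4-\|p-p_0\|^2=4-t^2,
$$
so that, by Proposition \ref{prop:greenSn}, $G(\S^n;p,p_0)=g(t)$ and $G(\S^n;p,-p_0)=g\bigl(\sqrt{4-t^2}\bigr)$. Feeding these into part (ii) of Proposition \ref{mediaGreenBola}, the desired bound
$$
\frac{1}{|B_a|}\int_{q\in B(p_0,a)}G(\S^n;p,q)\,dq\leq G(\S^n;p,p_0)+K(\S^n,a)
$$
becomes, after transposing every term to the right,
$$
0\leq g(t)+K(\S^n,a)+\frac{B_{\cos^2\frac{a}{2}}\!\left(\frac{n}{2},\frac{n}{2}\right)}{B_{\sin^2\frac{a}{2}}\!\left(\frac{n}{2},\frac{n}{2}\right)}\Bigl(g\bigl(\sqrt{4-t^2}\bigr)+K(\S^n,\pi-a)\Bigr),
$$
which is precisely the nonnegativity of $F(t,a)$ asserted in Lemma \ref{aux_cota_bolasnodisj_Sn}. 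Before quoting that lemma I would only check that the pair $(t,a)$ lies in its domain: since $p\in B(p_0,a)$ we have $d_R(p,p_0)<a$, hence $t=2\sin\frac{d_R(p,p_0)}{2}<2\sin\frac{a}{2}\leq 2$, and $a\in(0,\pi)$, so $(t,a)\in[0,2]\times[0,\pi]$; in the degenerate case $p=p_0$ the right-hand side is $+\infty$ and the inequality is trivial.

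I do not anticipate any real obstacle in this step: all the genuine work sits in Lemma \ref{aux_cota_bolasnodisj_Sn} (the single-variable minimization establishing $F\geq 0$) and in the mean-value formulas of Proposition \ref{mediaGreenBola}. The only things that require a moment's attention are keeping track of the prefactor $B_{\cos^2\frac{a}{2}}/B_{\sin^2\frac{a}{2}}$ and of the substitution $a\mapsto\pi-a$ when moving between the two sources, together with recording the identity $\|p+p_0\|^2=4-\|p-p_0\|^2$ that identifies $G(\S^n;p,-p_0)$ with $g(\sqrt{4-t^2})$. Finally, the fact that the inequality here points the opposite way to Corollary \ref{cor:cotainferiormediaS2} (here $\leq$, there $\geq$) is exactly what one should expect from the negative multiplicative constant $-1/(2\pi)$ relating the Green function to the logarithmic potential in \eqref{eq:GreenS2}.
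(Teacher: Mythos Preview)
Your proposal is correct and follows exactly the same approach as the paper, which simply records the proof as ``Immediate from Proposition \ref{mediaGreenBola} and Lemma \ref{aux_cota_bolasnodisj_Sn}.'' You have merely made explicit the two-case split and the identity $\|p+p_0\|^2=4-\|p-p_0\|^2$ that identifies the right-hand side of Proposition \ref{mediaGreenBola}(ii) with the function $F(t,a)$ of Lemma \ref{aux_cota_bolasnodisj_Sn}; this is precisely the intended unpacking of ``immediate.''
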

\begin{proof}
  Immediate from Proposition \ref{mediaGreenBola} and Lemma \ref{aux_cota_bolasnodisj_Sn}.
\end{proof}
\item We write the Green energy as follows:
\begin{align*}
E_{\mathbb{S}^n}(p_1, \ldots, p_N)=&\displaystyle\sum_{i\neq j} G(\mathbb{S}^n;p_i,p_j)\\
&-\frac{2N}{V_n}\displaystyle\sum_{i=1}^N \int_{q\in\mathbb{S}^n} G(\mathbb{S}^n;p_i,q)dq+\frac{N^2}{V_n^2}\int_{p,q\in\mathbb{S}^n}G(\mathbb{S}^n;p,q)dpdq.
\end{align*}
Note that from Definition \ref{Def_Energ_Green}, these two new integrals are equal to zero.
\item Let
\begin{align*}
U_{BB} &=\frac{N^2}{V_n^2}\int_{p,q\in\mathbb{S}^n}G(\mathbb{S}^n;p,q)dpdq=0,\\
U_i & = -\frac{2N}{V_n}\int_{\mathbb{S}^n}G(\mathbb{S}^n;p_i,q)dq=0,\\
U_{ij} & = G(\mathbb{S}^n;p_i,p_j),\\
\widehat{U}_i & = -\frac{2N}{V_n|B_a|}\int_{B(p_i,a)}\int_{\mathbb{S}^n} G(\mathbb{S}^n;p,q)dpdq=0,\\
\widehat{U}_{ij} & = \frac{1}{|B_a|^2} \int_{B(p_i,a)}\int_{B(p_j,a)} G(\mathbb{S}^n;p,q)dpdq.
\end{align*}
Define $\alpha,\beta,\gamma$ and $\delta$ by
\begin{align*}
E_{\mathbb{S}^n}(p_1, \ldots, p_N)=\underbrace{U_{BB}+\displaystyle\sum_{i=1}^N \widehat{U}_i+\displaystyle\sum_{i,j}\widehat{U}_{ij}}_{(\alpha)}+\underbrace{\displaystyle\sum_{i=1}^N (U_i-\widehat{U}_i)}_{(\beta)}\underbrace{-\displaystyle\sum_{i=1}^N \widehat{U}_{ii}}_{(\gamma)}+\underbrace{\displaystyle\sum_{i\neq j}(U_{ij} -\widehat{U}_{ij})}_{(\delta)}.
\end{align*}
Again $\alpha\geq0$ from Proposition \ref{positividad_alpha}, $\beta=0$ and as in Section \ref{sec:alternativeproofL}:
\begin{align*}
\gamma=&-\frac{N}{|B_a|^2} \int_{B(p_1,a)}\int_{B(p_1,a)} G(\S^n,p,q)dpdq\\
=&-\frac{N}{|B_a|^2}\int_{B(p_1,a)} \left(\int_{\S^2}G(\S^n,p,q)dp-\int_{B(-p_1,\pi-a)} G(\S^n,p,q)dp\right)dq
\\
=&\frac{N|B_{\pi-a}|}{|B_a|^2}\int_{B(p_1,a)} \left(
G(\S^n,-p_1,q)+K(\S^n,\pi-a)
\right)dq\\
=&\frac{N|B_{\pi-a}|}{|B_a|}\left(K(\S^n,\pi-a)+K(\S^n,a)+G(\S^n,-p_1,p_1)
\right)
\\
\stackrel{\eqref{eq:GyK}}{=}&\frac{N|B_{\pi-a}|}{|B_a|} \left(K(\S^n,\pi-a)+K(\S^n,a)-K(\S^n,\pi)
\right).
\end{align*}
We bound $\delta$ also following the same method as in Section \ref{sec:alternativeproofL}: for each $i\neq j$, from Corollary \ref{int_bolas_esfera_Sd} we have
\begin{align*}
U_{ij}-\widehat{U}_{ij}=& G(\mathbb{S}^n;p_i,p_j) - \frac{1}{|B_a|^2} \int_{B(p_i,a)}\int_{B(p_j,a)} G(\mathbb{S}^n;p,q)dpdq\\
\geq & G(\mathbb{S}^n;p_i,p_j) - \frac{1}{|B_a|} \int_{B(p_i,a)}G(\mathbb{S}^n;p_j,q)+K(\S^n,a) dq\\
\geq&-K(\S^n,a)+G(\mathbb{S}^n;p_i,p_j)-(G(\mathbb{S}^n;p_j,p_i)+K(\S^n,a))\\
=&-2K(\S^n,a).
\end{align*}
All in one, we have
$$
\delta\geq -2N(N-1)K(\S^n,a),
$$
and hence we conclude:
\begin{multline*}
E_{\mathbb{S}^n}(p_1, \ldots, p_N)\geq  -2N(N-1)K(\S^n,a)\\+\frac{N|B_{\pi-a}|}{|B_a|}\left(K(\S^n,\pi-a)+K(\S^n,a)-K(\S^n,\pi)
\right).
\end{multline*}
\item The inequality above is valid for any $a\in(0,\pi)$. Taking $a=C^{1/2}N^{-1/n}$ it is clear that, if $n\geq3$, a lower bound for $E_{\mathbb{S}^n}(p_1, \ldots, p_N)$ is
    $$
    (\star)=-2N^2K(\S^n,a)+\frac{nN^2V_n}{V_{n-1}C^{n/2}}\left(K(\S^n,\pi-a)+K(\S^n,a)-K(\S^n,\pi)
\right)+\text{l.o.t.}
    $$
    Using Lemma \ref{lem:Kabasic} we conclude that, if $n\geq3$,
    $$
    (\star)=-\frac{N^{2-2/n}}{n+2}\left(\frac{C}{V_n}+\frac{2nC^{1-n/2}}{(n-2)V_{n-1}}\right) +o(N^{2-2/n}).
    $$
    We are free to choose $C>0$. The optimal value $C=\left(\frac{nV_n}{V_{n-1}}\right)^{2/n}$  yields
        $$
    (\star)=-\frac{n^{1+2/n}}{(n^2-4)V_n^{1-2/n}V_{n-1}^{2/n}}N^{2-2/n} +o(N^{2-2/n}),
    $$
    finishing the proof of Theorem \ref{th:main}.
\end{enumerate}

In the light of \eqref{eq:cotasClog}, we see that the lower bound provided by the argument above in the case $n=2$ is surprisingly sharp. In the case of $\S^n$ for $n\geq3$ we have not found upper bounds on the Green energy to compare with, so we have performed some numerical experiments to provide some insight on the sharpness of our result. In Figure \ref{fig:comparaciones} we plot the (numerically found by a standard Riemannian-gradient method) minimal value of the Green energy for $n=4$ for an increasing number of points. This graphic suggests that our lower bound is again quite sharp in higher dimensions.

\begin{figure}
  \centering
  \includegraphics[width=\linewidth]{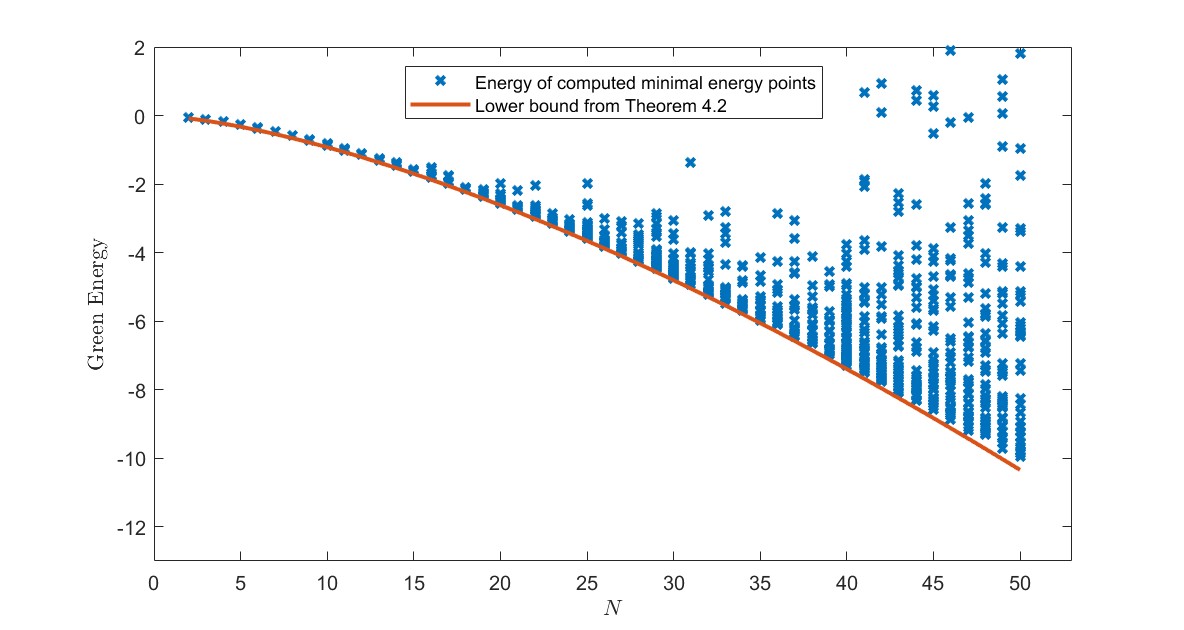}
  \caption{The minimum of the Green energy for increasing values of the number of points $N$, is compared to the lower bound provided by Theorem \ref{th:main} (solid line) in the case $n=4$. The minimal energy points have been obtained using Matlab's function {\tt{fminsearch}}, and crosses in the same vertical lines correspond to local minima fund by Matlab for different starting points.}\label{fig:comparaciones}
\end{figure}
\section{Proof of the technical results}\label{proofofballintegral}

\subsection{Proof of Lemma \ref{lem:Kabasic}}\label{sec:LemaKbasico}
If $n=2$, denoting $s=\sin^2\frac{a}{2}$ we have
$$K(\S^2,a)=\frac{1}{4\pi s}\displaystyle\sum_{k=0}^{\infty} \frac{s^{k+2}}{(k+1)(k+2)}.
$$
Now, the infinite sum is an analytic function in the complex unit disk and its second derivative is $\sum_{k\geq0}s^k=1/(1-s)$, and we deduce the value of the infinite sum above which is $s+(1-s)\log(1-s)$. All in one, we have
$$K(\S^2,a)=\frac{1}{4\pi}+\frac{\cos^2\frac{a}{2}\log\cos^2\frac{a}{2}}{4\pi \sin^2\frac{a}{2}},
$$
and the first claim is proved. For the second one, note that $K(\S^n,a)$ is complex analytic in the unit disk so we can just compute its Taylor series at $a=0$ term by term, and only the term $k=0$ has a nonzero second derivative at $a=0$ which yields
$$
K(\S^n,a)=\frac{2}{nV_n}\frac{ B_{\sin^2\frac{a}{2}}(\frac{n}{2}+1,\frac{n}{2})}{B_{\sin^2\frac{a}{2}}(\frac{n}{2}, \frac{n}{2})}+o(a^2)=\frac{a^2}{(2n+4)V_n}+o(a^2).
$$
The second asymptotic requires some more work. Denote
$$L(n) = \lim_{a\to0}\frac1{a^2}\left(K(\S^n,\pi)-K(\S^n,\pi-a)\right).$$ Using \eqref{eq:betacambia} it is clear that $L(n)=L_1(n)+L_2(n)$ where
    \begin{align*}
      L_1(n) = & \frac{-2}{nV_n}\lim_{a\to0}\frac1{a^2} \sum_{k=0}^{\infty} \frac{(n)_k}{\left(\frac{n}{2}+1\right)_k(k+1)}\frac{ B_{\sin^2\frac{a}{2}} B(\frac{n}{2},\frac{n}{2})B\left(\frac{n}{2}+k+1,\frac{n}{2}\right)}{B_{\cos^2\frac{a}{2}}(\frac{n}{2}, \frac{n}{2}) B(\frac{n}{2}, \frac{n}{2})},\\
      L_2(n) = & \frac{2}{nV_n}\lim_{a\to0}\frac1{a^2} \sum_{k=0}^{\infty} \frac{(n)_k}{\left(\frac{n}{2}+1\right)_k(k+1)}\frac{ B_{\sin^2\frac{a}{2}}(\frac{n}{2},\frac{n}{2}+k+1)}{B_{\cos^2\frac{a}{2}}(\frac{n}{2}, \frac{n}{2})}.
    \end{align*}
    The elementary upper bound from Lemma \ref{cota_beta} implies that $L_1(n)=0$ if $n\geq3$. As for $L_2(n)$, the asymptotic $a/2\sim \sin a/2$, the definition of the incomplete Beta function, the Monotone Convergence Theorem and the change of variables $t=u\sin^2\frac{a}{2}$ successively yield
    \begin{align*}
      L_2(n)
      =& \lim_{a\to0}\sum_{k=0}^{\infty}\frac1{\sin^2\frac{a}2}\int_0^{\sin^2\frac{a}{2}} \frac{t^{\frac{n}{2}-1} (1-t)^{\frac{n}{2}-1}}{2nV_nB(\frac{n}{2}, \frac{n}{2})} \frac{(n)_k(1-t)^{k+1}}{\left(\frac{n}{2}+1\right)_k(k+1)}dt\\
       =& \lim_{a\to0}\frac1{\sin^2\frac{a}2}\int_0^{\sin^2\frac{a}{2}} \frac{t^{\frac{n}{2}-1} (1-t)^{\frac{n}{2}-1}}{2nV_nB(\frac{n}{2}, \frac{n}{2})}\sum_{k=0}^{\infty} \frac{(n)_k(1-t)^{k+1}}{\left(\frac{n}{2}+1\right)_k(k+1)}dt\\
       =&\lim_{a\to0}\int_0^{1} \frac{(u\sin^2\frac{a}{2})^{\frac{n}{2}-1} (1-u\sin^2\frac{a}{2})^{\frac{n}{2}-1}}{2nV_nB(\frac{n}{2},\frac{n}{2})} \sum_{k=0}^{\infty} \frac{(n)_k(1-u\sin^2\frac{a}{2})^{k+1}}{\left(\frac{n}{2}+1\right)_k(k+1)}du.
    \end{align*}
    From the upper bound of Lemma \ref{exp_sum_cota}, the inner function is bounded above by some constant $C(n)$ whose value we do not need to know. Hence, we can interchange limit and integral by Lebesgue's Dominated Convergence Theorem getting
        \begin{align*}
      L_2(n)
       =&\int_0^{1} \frac{1}{2nV_nB(\frac{n}{2}, \frac{n}{2})}\lim_{a\to0}\left(\left(u\sin^2\frac{a}{2}\right)^{\frac{n}{2}-1} \sum_{k=0}^{\infty} \frac{(n)_k(1-u\sin^2\frac{a}{2})^{k+1}}{\left(\frac{n}{2}+1\right)_k(k+1)}\right)du.
    \end{align*}
    Finally, from Lemma \ref{lem:asyntotic}, the inner limit is constantly equal to $\Gamma(\frac{n}{2}+1)\Gamma(\frac{n}{2}-1)/\Gamma(n)$, which yields
     \begin{align*}
      L_2(n)=&\frac{\Gamma(\frac{n}{2}+1)\Gamma(\frac{n}{2}-1)}{2nV_nB(\frac{n}{2}, \frac{n}{2})\Gamma(n)}=\frac{1}{(2n-4)V_n},
      \end{align*}
      and the lemma follows.

\subsection{Proof of Proposition \ref{mediaGreenBola}}\label{sec:proofofmediaGreenBola}

Let $S(p_0,r)\subset \mathbb{S}^n$ be a geodesic sphere contained in  $\mathbb{S}^n$ and $|S_r|$ its volume. Since $S(p_0,r)$ is a $(n-1)$--dimensional sphere of radius $\sin r$, its volume is $\text{vol}( S(p_0,r))=V_{n-1}\sin^{n-1}r$. The volume of the geodesic ball is then (using Lemma \ref{lem:betas}):
$$
|B_a|=\int_0^a\text{vol}( S(p_0,r))\,dr=V_{n-1}\int_0^a\sin^{n-1}r\,dr= 2^{n-1}V_{n-1}B_{\sin^2\frac{r}{2}}\left(\frac{n}{2},\frac{n}{2}\right).
$$

The mean value equality if $p\not\in B(p_0,a)$ is essentially a known result (at least the existence of the constant $K(\S^n,a)$, although its exact value seems not to be present in the literature): consider the function $H(q)=G(\S^n,p,q)-G(\S^n,-p_0,q)$. Then, since $p,-p_0\notin B(p_0,a)$, we conclude that $\Delta H\equiv0$ in that ball. From \cite[Theorem 1]{Willmore}, the mean value of $H$ on $S(p_0,r)$ with $r<a$ is equal to $H(p_0)=G(\S^n,p,p_0)-G(\S^n,-p_0,p_0)$. That is,

\begin{multline*}
\int_{q\in S(p_0,r)}(G(\mathbb{S}^n; p, q)-G(\S^n,p,p_0))dq
=\\
\int_{q\in S(p_0,r)}(G(\mathbb{S}^n; -p_0, q)-G(\S^n,-p_0,p_0))dq.
\end{multline*}
The integral of $G(\mathbb{S}^n; p, q)-G(\S^n,p,p_0)$ when $q\in B(p_0,a)$ then equals
\begin{multline*}
\int_0^a\int_{q\in S(p_0,r)}(G(\mathbb{S}^n; p, q)-G(\S^n,p,p_0))dq\,dr\\
=\int_0^a\int_{q\in S(p_0,r)}(G(\mathbb{S}^n; -p_0, q)-G(\S^n,-p_0,p_0))dq\,dr.
\end{multline*}
In other words, we can assume that $p=-p_0$ to finish the mean value computation.  Now this is a straightforward computation: denote $$A=|B_a|^{-1}\int_{q\in B(p_0,a)}(G(\mathbb{S}^n; -p_0, q))dq-G(\mathbb{S}^n; -p_0, p_0)$$ and note that
\begin{align*}
 A = &  \frac{1}{|B_a|}\int_0^a\int_{q\in S(p_0,r)}G(\mathbb{S}^n; -p_0, q)dq-G(\mathbb{S}^n; -p_0, p_0)\\
  = & \frac{1}{|B_a|}\int_0^a \text{vol}(S(p_0,r))\frac{2}{nV_n}\displaystyle\sum_{k=0}^{\infty} \frac{(n)_k}{(k+1)\left(\frac{n}{2}+1\right)_k} \sin^{2k+2}\frac{r}{2}\,dr
  \\
  = & \frac{2}{2^{n-1}nV_nB_{\sin^2\frac{a}{2}}\left(\frac{n}2,\frac{n}2\right)}\displaystyle\sum_{k=0}^{\infty} \frac{(n)_k}{(k+1)\left(\frac{n}{2}+1\right)_k} \int_0^a\sin^{n-1}r\sin^{2k+2}\frac{r}{2}\,dr
  \\
  = & \frac{2}{nV_nB_{\sin^2\frac{a}{2}}\left(\frac{n}2,\frac{n}2\right)}\displaystyle\sum_{k=0}^{\infty} \frac{(n)_k}{(k+1)\left(\frac{n}{2}+1\right)_k} \int_0^a\sin^{2k+n+1}\frac{r}{2}\cos^{n-1}\frac{r}{2}\,dr
  \\
 =& \frac{2}{nV_nB_{\sin^2\frac{a}{2}}\left(\frac{n}2,\frac{n}2\right)}\displaystyle\sum_{k=0}^{\infty} \frac{(n)_k}{(k+1)\left(\frac{n}{2}+1\right)_k} \int_0^{\sin^2\frac{a}{2}}u^{k+\frac{n}{2}}(1-u)^{\frac{n}{2}-1}\,du
 \\
 =& \frac{2}{nV_nB_{\sin^2\frac{a}{2}}\left(\frac{n}2,\frac{n}2\right)}\displaystyle\sum_{k=0}^{\infty} \frac{(n)_k}{(k+1)\left(\frac{n}{2}+1\right)_k} B_{\sin^2\frac{a}{2}}\left(k+\frac{n}{2}+1,\frac{n}{2}\right)
 \\
 =&K(\S^n,a),
\end{align*}
as claimed.

Now, suppose that $p\in B(p_0,a)$. Then,
\begin{align*}
\int_{q\in B(p_0,a)}G(\mathbb{S}^n; p, q)dq &=\int_{q\in \mathbb{S}^n}G(\mathbb{S}^n; p, q)dq-\int_{q\in B(-{p}_0,\pi-a)}G(\mathbb{S}^n; p, q)dq\\
&=-\int_{q\in B(-p_0,\pi-a)}G(\mathbb{S}^n; p, q)dq
\end{align*}
since $G(\mathbb{S}^n; p,\cdot)$ has zero mean for all $p\in\mathbb{S}^n$. From the previous claim we have:
$$
\int_{q\in B(-p_0,\pi-a)}  G(\mathbb{S}^n; p, q)dq=|B_{\pi-a}|
\left(G(\mathbb{S}^n; p, -p_0)+K(\S^n,\pi-a)\right),
$$
which readily gives the second item.

\subsection{Proof of Lemma \ref{aux_cota_bolasnodisj_Sn}}\label{sec:proofoflema4.2}
Carefully collecting the terms and writing down the definition of $g(t)$ and $K(\S^n,a)$, Lemma \ref{aux_cota_bolasnodisj_Sn} is implied by the following.

\begin{prop}\label{cota_para_delta}
The function $F:[0,1]\times[0,1]$ given by
\begin{align*}
F(s,\alpha)&= \displaystyle\sum_{k=0}^{\infty}\frac{(n)_k}{(\frac{n}{2}+1)_k(k+1)}Q_k(s,\alpha),\text{ where }\\
Q_k(s,\alpha) &=(1-s)^{k+1}B_{\alpha}\left(\frac{n}{2},\frac{n}{2}\right)+s^{k+1} B_{1-\alpha}\left(\frac{n}{2},\frac{n}{2}\right)\\
&+B_{\alpha}\left(\frac{n}{2}+k+1,\frac{n}{2}\right)-B_{\alpha} \left(\frac{n}{2},\frac{n}{2}+k+1\right),
\end{align*}
is not negative.
\end{prop}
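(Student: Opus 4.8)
The plan is to prove Proposition~\ref{cota_para_delta} by reducing it to a single closed‑form evaluation of a Gauss hypergeometric function, after which the positivity follows by an elementary one‑variable calculus argument (in the spirit of Lemma~\ref{aux_cota_bolasnodisj}). First I would rewrite the incomplete Beta functions appearing in $Q_k$ as integrals against the symmetric weight $w(t)=t^{\frac n2-1}(1-t)^{\frac n2-1}$ on $[0,1]$. Writing $\beta_x=B_x(\tfrac n2,\tfrac n2)=\int_0^x w$, one has $\beta_x+\beta_{1-x}=B(\tfrac n2,\tfrac n2)=:B$, the function $\beta$ is strictly increasing, and $B_\alpha(\tfrac n2+k+1,\tfrac n2)=\int_0^\alpha w(t)\,t^{k+1}\,dt$, $B_\alpha(\tfrac n2,\tfrac n2+k+1)=\int_0^\alpha w(t)(1-t)^{k+1}\,dt$, so that
$$Q_k(s,\alpha)=(1-s)^{k+1}\beta_\alpha+s^{k+1}\beta_{1-\alpha}+\int_0^\alpha w(t)\big(t^{k+1}-(1-t)^{k+1}\big)\,dt.$$
Setting $\lambda_k=\frac{(n)_k}{(\frac n2+1)_k(k+1)}>0$, $P(x)=\sum_k\lambda_k x^{k+1}$ (radius of convergence $1$) and $\Phi(x)=P'(x)=\sum_k\frac{(n)_k}{(\frac n2+1)_k}x^k={}_2F_1(n,1;\tfrac n2+1;x)$, Tonelli's theorem then gives, for $(s,\alpha)\in(0,1)^2$,
$$F(s,\alpha)=\beta_\alpha\,P(1-s)+\beta_{1-\alpha}\,P(s)+\int_0^\alpha w(t)\big(P(t)-P(1-t)\big)\,dt.$$

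The heart of the matter is the identity
$$\Phi(x)={}_2F_1\!\left(n,1;\tfrac n2+1;x\right)=\frac{n\,B_x(\frac n2,\frac n2)}{2\,(x(1-x))^{n/2}}=\frac{n\,\beta_x}{2\,(x(1-x))^{n/2}},\qquad x\in(0,1).$$
I would prove it by combining Euler's transformation ${}_2F_1(a,b;c;x)=(1-x)^{c-a-b}{}_2F_1(c-a,c-b;c;x)$ (e.g.\ \cite[(9.131.1)]{GR15}), which turns the left‑hand side into $(1-x)^{-n/2}{}_2F_1(1-\tfrac n2,\tfrac n2;\tfrac n2+1;x)$, with the elementary expansion $B_x(p,q)=\frac{x^p}{p}\,{}_2F_1(p,1-q;p+1;x)$ (from the binomial series for $(1-t)^{q-1}$), which at $p=q=\tfrac n2$ reads $\beta_x=\frac{2x^{n/2}}{n}\,{}_2F_1(1-\tfrac n2,\tfrac n2;\tfrac n2+1;x)$.

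With the identity in hand the rest is calculus. Fixing $\alpha\in(0,1)$ and differentiating the displayed formula for $F$ gives $\partial_s F(s,\alpha)=\beta_{1-\alpha}\Phi(s)-\beta_\alpha\Phi(1-s)$; substituting the closed form for $\Phi$ and using $\beta_{1-\alpha}=B-\beta_\alpha$, $\beta_{1-s}=B-\beta_s$, the cross terms cancel and one is left with
$$\partial_s F(s,\alpha)=\frac{nB\,(\beta_s-\beta_\alpha)}{2\,(s(1-s))^{n/2}}.$$
Since $\beta$ is strictly increasing, $\partial_s F(s,\alpha)$ is negative for $s<\alpha$, zero at $s=\alpha$, and positive for $s>\alpha$, so $F(\cdot,\alpha)$ attains its minimum at $s=\alpha$; it therefore remains to check $F(\alpha,\alpha)=0$. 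Here I would integrate by parts the two integrals in $F(\alpha,\alpha)$ using $w=\beta'$ (the boundary terms at $t=0$ vanish because $\beta_0=0$ and $\beta_t P(1-t)\to0$ as $t\to0^+$), and then use the closed form of $\Phi$ once more together with $\beta_t+\beta_{1-t}=B$: all terms collapse and one gets $F(\alpha,\alpha)=B\,P(\alpha)-B\int_0^\alpha\Phi=0$. Finally the boundary cases are immediate: $F(s,0)=B\sum_k\lambda_k s^{k+1}\ge0$, $F(s,1)=B\sum_k\lambda_k(1-s)^{k+1}\ge0$, and $F\ge0$ on the edges $s\in\{0,1\}$ follows by monotone limits (where in fact $F=+\infty$).

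I expect the main obstacle to be exactly the closed‑form evaluation of ${}_2F_1(n,1;\tfrac n2+1;x)$: once it is available, the factorization of $\partial_s F$ and the vanishing of $F(\alpha,\alpha)$ are essentially forced, and they are precisely the two facts ($s=\alpha$ is the minimizer, with critical value $0$) that match the geometric picture behind Corollary~\ref{int_bolas_esfera_Sd}. A secondary, purely technical nuisance is bookkeeping the convergence near $x=1$, where $P(1)=+\infty$ for $n\ge3$; this is why I would carry out the argument on $(0,1)^2$ and recover the closed square by continuity and monotone limits.
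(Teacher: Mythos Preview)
Your argument is correct and rests on the same key identity the paper uses, namely
\[
{}_2F_1\!\Big(1,n;\tfrac n2+1;x\Big)=\frac{n\,B_x(\tfrac n2,\tfrac n2)}{2\,(x(1-x))^{n/2}},
\]
obtained exactly as you say from \eqref{def_incompletebeta} together with Euler's transformation \cite[(9.131.1)]{GR15}. The organization, however, is genuinely different. The paper fixes $s$ and minimizes each $Q_k(s,\alpha)$ in $\alpha$ by the elementary observation
\[
\partial_\alpha Q_k(s,\alpha)=\alpha^{\frac n2-1}(1-\alpha)^{\frac n2-1}\big[(1-s)^{k+1}-s^{k+1}+\alpha^{k+1}-(1-\alpha)^{k+1}\big],
\]
which locates the minimum at $\alpha=s$ without any special-function input; the hypergeometric identity is then used only to show $\partial_s F(s,s)=0$, and the value $F(s,s)\equiv 0$ is obtained by proving $\lim_{s\to0}F(s,s)=0$ via the crude bounds of Lemmas~\ref{cota_beta} and~\ref{exp_sum_cota}. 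You instead minimize in $s$ for fixed $\alpha$, which already requires the closed form of $\Phi$, but you are rewarded with a direct evaluation $F(\alpha,\alpha)=0$ via integration by parts: the relation $\beta_t\big(\Phi(t)+\Phi(1-t)\big)=B\,\Phi(t)$ that you exploit is exactly the symmetric consequence of the identity that the paper uses only antisymmetrically. Your route is slightly cleaner at the end (no separate limit computation), at the cost of needing the identity earlier and handling the boundary term $\beta_t P(1-t)\to0$, which is immediate from Lemma~\ref{exp_sum_cota}. One small precision: in the Tonelli step, split the integral $\int_0^\alpha w(t)\big(t^{k+1}-(1-t)^{k+1}\big)\,dt$ into its two nonnegative pieces before swapping sum and integral.
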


\begin{proof}
We first fix $s$ and look for minima in $Q_k(s,\alpha)$ with respect to $\alpha$. We have
$$
\frac{\partial}{\partial \alpha} Q_k(s,\alpha)=\alpha^{\frac{n}{2}-1}(1-\alpha)^{\frac{n}{2}-1}[(1-s)^{k+1}-s^{k+1}+\alpha^{k+1}-(1-\alpha)^{k+1}],
$$
which readily implies that $Q_k(s,\alpha)$ is a decreasing function of $\alpha$ in $[0,s]$ and a increasing function of $\alpha$ in $[s,1]$. That is, $Q(s,\alpha)$ has a global minimum at $\alpha=s$ and consequently so does $F(s,\alpha)$. Hence it suffices to check that $F(s,s)\geq0,\,\forall s$. From \eqref{def_incompletebeta} and \cite[p. 1008, (9.131.1)]{GR15}, we have
\begin{align*}
\frac{\partial}{\partial s}F(s,s)=& \sum_{k=0}^{\infty} \frac{(n)_k}{(n/2+1)_k}\left(s^kB_{1-s}\left(\frac{n}{2},\frac{n}{2}\right)-(1-s)^kB_s\left(\frac{n}{2}, \frac{n}{2}\right)\right)\\
=& B_{1-s}\left(\frac{n}{2},\frac{n}{2}\right){}_2F_1(1,n;n/2+1;s)-B_s\left(\frac{n}{2},\frac{n}{2}\right){}_2F_1(1,n;n/2+1;1-s)\\
=& 0.
\end{align*}
We will prove that $\lim_{s\to 0}F(s,s)=0$ to conclude the result. To this end, we define
\begin{align*}
T_1(s) &= B_s\left(\frac{n}{2},\frac{n}{2}\right)\displaystyle\sum_{k=0}^{\infty}\frac{(n)_k}{ \left(\frac{n}{2}+1\right)_{k}(k+1)}(1-s)^{k+1},\\
T_2(s) &= B_{1-s}\left(\frac{n}{2},\frac{n}{2}\right)\displaystyle\sum_{k=0}^{\infty}\frac{(n)_k}{ \left(\frac{n}{2}+1\right)_{k}(k+1)}s^{k+1},\\
T_3(s) &= \displaystyle\sum_{k=0}^{\infty}\frac{(n)_k}{\left(\frac{n}{2}+1\right)_{k}(k+1)}B_s\left( \frac{n}{2}+k+1,\frac{n}{2}\right),\\
T_4(s) &= \displaystyle\sum_{k=0}^{\infty}\frac{(n)_k}{\left(\frac{n}{2}+1\right)_{k}(k+1)}B_s\left( \frac{n}{2},\frac{n}{2}+k+1\right).
\end{align*}
So $F(s,s)=T_1(s)+T_2(s)+T_3(s)-T_4(s)$, and it suffices to check that each $T_i(s)\to0$ as $s\to0$, which is quite immediate from lemmas \ref{cota_beta} and \ref{exp_sum_cota}:
\begin{align*}
T_1(s)\leq& \,C(n)s^{\frac{n}{2}}S_n(1-s)\leq C(n)s,\\
T_2(s)\leq& \,C(n)S_n(s) \leq \frac{C(n)s}{(1-s)^{\frac{n}{2}-1}},\\
T_3(s)\leq& \sum_{k=0}^{\infty}\frac{(n)_k}{\left(\frac{n}{2}+1\right)_{k}(k+1)}s^{\frac{n}{2}+k+1}=s^{\frac{n}{2}}S_n(s)\leq \frac{s^{\frac{n}{2}+1}C(n)}{(1-s)^{\frac{n}{2}-1}},\\
T_4(s)\leq & \int_0^s t^{\frac{n}{2}-1}(1-t)^{\frac{n}{2}-1}S_n(1-t)dt
\leq  \,C(n)\int_0^s (1-t)^{n/2}dt\leq C(n)s,
\end{align*}
where each apparition of $C(n)$ may be a different constant. This finishes the proof.

\end{proof}


\appendix


\section{Some special functions}\label{Func_esp}

The contents of this appendix are standard and are mainly taken from \cite{BHS19} and \cite{GR15}. Recall the Pochhammer symbol, defined for $n\in\C$ and $k\geq0$ an integer:
\begin{equation}\label{eq:pochhammer}
(n)_k=n(n+1)\cdots (n+k-1), \quad (n)_0=1.
\end{equation}
Clearly, $(1)_k=k!$, and if $n,n+k$ are not zero or negative integers, then
$$
(n)_k=\frac{\Gamma(n+k)}{\Gamma(n)}.
$$

\subsection{The Gaussian hypergeometric series}

Recall that the Gaussian hypergeometric series is analytic in $|z|<1$ and defined by the power series
\begin{equation}\label{Serie_hiper_Gauss}
{}_2F_1(a,b;c;z)=\displaystyle\sum_{k=0}^{\infty}\frac{(a)_k(b)_k}{(c)_k}\frac{z^k}{k!}.
\end{equation}
It is undefined for integer $c\leq0$.

\subsection{The beta function}

Euler's beta function is
\begin{equation*}
B(\alpha,\beta)=\int_0^1 t^{\alpha-1}(1-t)^{\beta-1}dt=\int_0^{+\infty} \frac{t^{\alpha-1}}{(1+t)^{\alpha+\beta}}, \quad \text{Re}(\alpha), \,\text{Re}(\beta)>0,
\end{equation*}
and clearly satisfies $B(\alpha,\beta)=B(\beta,\alpha)$. The incomplete beta function
\begin{equation}\label{def_incompletebeta}
B_x(\alpha,\beta)=\int_0^x t^{\alpha-1}(1-t)^{\beta-1}dt=\frac{x^{\alpha}}{\alpha}{}_2F_1(\alpha,1-\beta;\alpha+1;x),
\end{equation}
satisfies
\begin{equation}\label{eq:betacambia}
B(\alpha,\beta)=B_x(\alpha,\beta)+B_{1-x}(\beta, \alpha).
\end{equation}

\begin{lemma}\label{lem:betas}
For all $r\in[0,\pi]$ we have
\begin{align*}
\int_0^r\sin^{n-1}t\,dt=&2^{n-1}B_{\sin^2\frac{r}{2}}\left(\frac{n}{2},\frac{n}{2}\right), \\
\int_{r}^{\pi} \sin^{n-1} t\, dt=&2^{n-1}B_{\cos^2\frac{r}{2}}\left(\frac{n}{2},\frac{n}{2}\right).
\end{align*}
\end{lemma}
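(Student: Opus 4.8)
The plan is to reduce both integrals to the incomplete beta function by the standard half--angle substitution. For the first identity, I would use the double--angle formula $\sin t=2\sin\frac{t}{2}\cos\frac{t}{2}$ to write
\[
\sin^{n-1}t=2^{n-1}\left(\sin^2\tfrac{t}{2}\right)^{\frac{n}{2}-1}\left(\cos^2\tfrac{t}{2}\right)^{\frac{n}{2}-1}\sin\tfrac{t}{2}\cos\tfrac{t}{2},
\]
and then substitute $u=\sin^2\frac{t}{2}$, so that $du=\sin\frac{t}{2}\cos\frac{t}{2}\,dt$ and $1-u=\cos^2\frac{t}{2}$. As $t$ ranges over $[0,r]$, the variable $u$ ranges over $[0,\sin^2\frac{r}{2}]$, and the integral becomes $2^{n-1}\int_0^{\sin^2\frac{r}{2}}u^{\frac{n}{2}-1}(1-u)^{\frac{n}{2}-1}\,du$, which is exactly $2^{n-1}B_{\sin^2\frac{r}{2}}\!\left(\frac{n}{2},\frac{n}{2}\right)$ by the definition \eqref{def_incompletebeta}.

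For the second identity, I would apply the change of variables $t\mapsto\pi-t$, which leaves $\sin t$ unchanged and sends the interval $[r,\pi]$ to $[0,\pi-r]$. Combining this with the first identity and the elementary fact $\sin\frac{\pi-r}{2}=\cos\frac{r}{2}$ yields $\int_{r}^{\pi}\sin^{n-1}t\,dt=2^{n-1}B_{\cos^2\frac{r}{2}}\!\left(\frac{n}{2},\frac{n}{2}\right)$. Alternatively, one may deduce the same formula from \eqref{eq:betacambia} applied with $\alpha=\beta=\frac{n}{2}$ and $x=\sin^2\frac{r}{2}$, together with the value $\int_{0}^{\pi}\sin^{n-1}t\,dt=2^{n-1}B\!\left(\frac{n}{2},\frac{n}{2}\right)$ obtained from the first identity at $r=\pi$.

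There is no genuine obstacle here: the argument is a routine substitution. The only points requiring a little care are the bookkeeping of the limits of integration under each change of variables and the trigonometric identity $\sin\frac{\pi-r}{2}=\cos\frac{r}{2}$ used to recast the endpoint of the beta integral.
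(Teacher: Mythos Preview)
Your argument is correct. The paper's own proof takes a slightly different (and shorter) route: rather than carrying out the substitution, it simply observes that both sides of each identity agree at one endpoint ($r=0$ for the first, $r=\pi$ for the second) and that their derivatives with respect to $r$ coincide, so the two functions are equal. Under the hood this is the same computation as yours---differentiating $2^{n-1}B_{\sin^2\frac{r}{2}}(\frac{n}{2},\frac{n}{2})$ by the chain rule is exactly what your substitution $u=\sin^2\frac{t}{2}$ encodes---but your direct change of variables has the advantage of \emph{deriving} the formula rather than merely verifying it, and also makes transparent how the second identity follows from the first via $t\mapsto\pi-t$.
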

\begin{proof}
In both cases, the functions involved have the same value at $r=0$ or $r=\pi$ respectively, they are smooth and their derivatives coincide, hence they are equal.

\end{proof}

\begin{lemma}\label{cota_beta}
For $0<s<1$ and $\alpha,\beta\in(0,\infty)$ we have:
$$
\frac{(1-s)^{\beta-1}s^{\alpha}}{\alpha}=(1-s)^{\beta-1}\int_0^s t^{\alpha-1} \leq B_s(\alpha,\beta) \leq \int_0^s t^{\alpha-1}dt=\frac{s^{\alpha}}{\alpha}.
$$
\end{lemma}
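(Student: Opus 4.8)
The plan is a one-line sandwich estimate straight from the definition. I would write out \eqref{def_incompletebeta} as
\[
B_s(\alpha,\beta)=\int_0^s t^{\alpha-1}(1-t)^{\beta-1}\,dt ,
\]
and observe that on $(0,s)$ the integrand is the product of the nonnegative weight $t^{\alpha-1}$ with the factor $(1-t)^{\beta-1}$. So it suffices to bound $(1-t)^{\beta-1}$ uniformly for $t\in[0,s]$ and then integrate $t^{\alpha-1}$ over $[0,s]$, since integration preserves inequalities between integrands.

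The key step is the monotonicity of $t\mapsto(1-t)^{\beta-1}$ on $[0,1)$. When $\beta\ge1$ this map is nonincreasing, so on the subinterval $[0,s]$ it attains its maximum $1$ at $t=0$ and its minimum $(1-s)^{\beta-1}$ at $t=s$; that is,
\[
(1-s)^{\beta-1}\le(1-t)^{\beta-1}\le 1,\qquad t\in[0,s].
\]
Multiplying through by $t^{\alpha-1}\ge0$ and integrating over $[0,s]$ gives
\[
(1-s)^{\beta-1}\int_0^s t^{\alpha-1}\,dt\le B_s(\alpha,\beta)\le\int_0^s t^{\alpha-1}\,dt ,
\]
and evaluating $\int_0^s t^{\alpha-1}\,dt=s^{\alpha}/\alpha$ yields the stated two-sided bound.

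There is essentially no obstacle; the only point worth flagging is the sign of $\beta-1$. For $0<\beta<1$ the map $t\mapsto(1-t)^{\beta-1}$ is instead nondecreasing on $[0,s]$, so both inequalities above reverse. Since the lemma is used in this paper only with $\beta=\tfrac n2\ge1$ (indeed $n\ge2$), the stated form is exactly the one needed; if a version valid for all $\beta>0$ were wanted one could state it symmetrically with $\min\{1,(1-s)^{\beta-1}\}$ and $\max\{1,(1-s)^{\beta-1}\}$ as the lower and upper multipliers of $s^{\alpha}/\alpha$, but that refinement plays no role in what follows.
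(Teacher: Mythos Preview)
Your argument is exactly the natural one, and in fact the paper does not give a proof of this lemma at all: it is stated in the appendix as an elementary bound and left unproved. Your sandwich of the factor $(1-t)^{\beta-1}$ between its endpoint values on $[0,s]$, followed by integration against $t^{\alpha-1}$, is precisely the intended justification.

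Your remark about the sign of $\beta-1$ is also well taken: as written, the double inequality fails for $0<\beta<1$ (the two bounds swap), so the hypothesis should really be $\beta\ge1$. You are right that this is harmless here, since every invocation of the lemma in the paper has second argument $\tfrac n2$ or $\tfrac n2+k+1$ with $n\ge2$, hence $\beta\ge1$ throughout.
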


\section{The Green function in a compact Riemannian manifold}\label{Funcion_Green}
The Green function is a natural potential on any compact manifold.

\begin{definition}\label{Def_Energ_Green}
The Green energy of $N$ points $p_1,\ldots, p_N$ on a compact Riemannian manifold $\mathcal{M}$ is
$$
E_{\mathcal{M}}(p_1,\ldots,p_N)=\displaystyle\sum_{i\neq j} G(\mathcal{M};p_i,p_j),
$$
where $G:\mathcal{M}\times\mathcal{M}\setminus\{(p,p):p\in\mathcal{M}\} \to \mathbb{R}$ is the unique function, known as Green function (potential), with the following properties:
\begin{enumerate}
\item $\Delta_q G=\mathcal{S}_p(q)-\text{vol}(\mathcal{M})^{-1}$, with $\mathcal{S}_p$ the Dirac's delta function, in the sense of distributions.
\item Symmetry: $G(\mathcal{M};p,q)=G(\mathcal{M};q,p)$.
\item $G(\mathcal{M};p,\cdot)$ has mean zero $\forall p\in\mathcal{M}$, i.e., $\int_{q\in\mathcal{M}}G(\mathcal{M};p,q)dq=0$.
\end{enumerate}
\end{definition}
We follow the convention that the Riemannian Laplacian is given by $\Delta=-\text{div} \nabla$.
The points that minimize the Green energy are asymptotically uniformly distributed in any compact Riemannian manifold (see the main result in \cite{BeltranCorralCriado2019}).

\begin{prop}\label{positividad_alpha}
Let $G(\mathcal{M};p,q)$ be the Green function of a manifold $\mathcal{M}$ and $\nu$ any finite signed measure such that $\nu(M)=0$, i.e., $\int_{\mathcal{M}}\nu\text{dvol}=0$. Then,
\begin{equation*}
\int_{p,q\in\mathcal{M}} G(\mathcal{M};p,q)d\nu(p)d\nu(q)\geqslant 0,
\end{equation*}
with an equality if and only if $\nu=0$.
\end{prop}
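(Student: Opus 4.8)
The plan is to use the spectral decomposition of the Laplace--Beltrami operator on the compact manifold $\mathcal{M}$. Let $0=\lambda_0<\lambda_1\le\lambda_2\le\cdots$ be its eigenvalues, repeated with multiplicity, and let $\{\phi_k\}_{k\ge0}$ be a real orthonormal basis of $L^2(\mathcal{M})$ with $\Delta\phi_k=\lambda_k\phi_k$ and $\phi_0\equiv\text{vol}(\mathcal{M})^{-1/2}$. First I would check that the kernel $\sum_{k\ge1}\lambda_k^{-1}\phi_k(p)\phi_k(q)$ satisfies the three defining properties of Definition \ref{Def_Energ_Green}: it is symmetric, each $\phi_k$ with $k\ge1$ has zero mean so property (3) holds, and applying $\Delta_q$ term by term produces $\sum_{k\ge1}\phi_k(p)\phi_k(q)=\mathcal{S}_p(q)-\text{vol}(\mathcal{M})^{-1}$ by completeness of $\{\phi_k\}$; hence by the uniqueness built into Definition \ref{Def_Energ_Green} this kernel is $G(\mathcal{M};p,q)$. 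Writing $c_k=\int_{\mathcal{M}}\phi_k\,d\nu$ for a finite signed measure $\nu$ with $\nu(\mathcal{M})=0$, so that $c_0=\text{vol}(\mathcal{M})^{-1/2}\nu(\mathcal{M})=0$, the heart of the proof is the identity
\[
\int_{p,q\in\mathcal{M}}G(\mathcal{M};p,q)\,d\nu(p)\,d\nu(q)=\sum_{k\ge1}\frac{c_k^2}{\lambda_k}\ge0 ,
\]
which gives the asserted inequality; vanishing of the right-hand side forces $c_k=0$ for all $k\ge1$, hence for all $k\ge0$, and since finite linear combinations of eigenfunctions are dense in $C(\mathcal{M})$ (elliptic regularity plus the Sobolev embedding) the Riesz representation theorem yields $\nu=0$.

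The only delicate point is justifying the interchange of summation and integration for a general measure $\nu$, which may carry atoms (where $G=+\infty$, so the double integral is $+\infty$; note it is in any case well defined in $(-\infty,+\infty]$ because $G$ is bounded below) and where the series for $G$ need not converge in $L^2(\mathcal{M}\times\mathcal{M})$ once $\dim\mathcal{M}\ge4$. I would handle this by heat-kernel regularization. For a \emph{smooth} mean-zero density $\rho$ the function $u=G\rho$ is smooth and, using property (1) of Definition \ref{Def_Energ_Green} together with $\int_{\mathcal{M}}\rho=0$, solves $\Delta u=\rho$ with $\int_{\mathcal{M}}u=0$; the divergence theorem (with the convention $\Delta=-\operatorname{div}\nabla$, and no boundary) then yields
\[
\int_{p,q\in\mathcal{M}}G(\mathcal{M};p,q)\,\rho(p)\,\rho(q)\,dp\,dq=\int_{\mathcal{M}}u\,\Delta u=\int_{\mathcal{M}}\langle\nabla u,\nabla u\rangle\ge0 ,
\]
with equality iff $\nabla u\equiv0$, i.e.\ $u\equiv0$, i.e.\ $\rho\equiv0$. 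Applying this to $\rho=\nu_t:=e^{-t\Delta}\nu$ (smooth, mean zero) and expanding $u$ in eigenfunctions gives $\int G\,d\nu_t\,d\nu_t=\sum_{k\ge1}\lambda_k^{-1}e^{-2t\lambda_k}c_k^2$, which increases to $\sum_{k\ge1}\lambda_k^{-1}c_k^2$ as $t\downarrow0$ by monotone convergence.

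It remains to identify this limit with $\int_{p,q}G\,d\nu\,d\nu$. Writing $\int G\,d\nu_t\,d\nu_t=\int G_{2t}\,d\nu\,d\nu$ with
\[
G_{2t}(p,q)=\int_{\mathcal{M}}\int_{\mathcal{M}}p_t(p,p')\,p_t(q,q')\,G(p',q')\,dp'\,dq' ,
\]
$p_t$ the heat kernel, one sees that $G_{2t}$ is smooth, bounded below uniformly in $t$ (because $G$ is continuous off the diagonal, tends to $+\infty$ along it, and has zero mean on the compact $\mathcal{M}$, hence is bounded below), and converges pointwise to $G$ (to $+\infty$ on the diagonal); a standard monotone/dominated convergence argument over the diagonal and its complement then identifies $\lim_{t\downarrow0}\int G_{2t}\,d\nu\,d\nu$ with $\int_{p,q}G\,d\nu\,d\nu$. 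The inequality follows, and the equality case follows from $\nabla u_t\equiv0$ for every $t$ together with completeness of the $\phi_k$. I expect this last measure-theoretic bookkeeping to be the only real obstacle; the algebraic content is entirely contained in the two displayed identities.
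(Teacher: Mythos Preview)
The paper does not prove this proposition; it simply cites \cite[Def.~3.2 and Prop.~3.14]{BeltranCorralCriado2019}. Your argument via the spectral expansion $G=\sum_{k\ge1}\lambda_k^{-1}\phi_k\otimes\phi_k$ and heat--kernel regularization is the standard proof and is substantively correct; it is very likely the route taken in the cited reference as well, since $G$ is the integral kernel of $\Delta^{-1}$ on the orthogonal complement of the constants, which makes positive--definiteness on mean--zero inputs transparent.

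One small caveat: your parenthetical remark that the double integral is ``in any case well defined in $(-\infty,+\infty]$ because $G$ is bounded below'' is not quite right for a \emph{signed} product measure. The negative part of $\nu\times\nu$ in its Hahn--Jordan decomposition is $\nu_{+}\times\nu_{-}+\nu_{-}\times\nu_{+}$, which indeed does not charge the diagonal, but may still have infinite $G$--integral (for instance, take $\nu_{\pm}$ supported on adjacent arcs of a smooth curve in $\S^{n}$ with $n\ge4$). One normally adds the hypothesis of finite Green energy, or restricts to absolutely continuous $\nu$. This is immaterial here: the only measure to which the paper applies the proposition is $\mu=\sum_i|B_a|^{-1}\chi_{B(p_i,a)}-N/V_n$, a bounded density, so your smooth--case identity
\[
\iint_{\mathcal{M}\times\mathcal{M}} G(p,q)\rho(p)\rho(q)\,dp\,dq=\int_{\mathcal{M}}|\nabla u|^2,\qquad \Delta u=\rho,\quad \int_{\mathcal{M}}u=0,
\]
already proves everything the paper needs, and the limiting step can be dispensed with.
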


\begin{proof}
See \cite[p. 166, Definition 3.2]{BeltranCorralCriado2019} and \cite[p. 175, Proposition 3.14]{BeltranCorralCriado2019}.
\end{proof}

\subsection{Computation of the Green function in compact harmonic manifolds}\label{AlgoritmoFGreen}

The Green function of a general Riemannian manifold can be very hard to compute. In \cite{BeltranCorralCriado2019}, a method is given to compute it in compact harmonic manifolds, that is to say, in spheres and projective spaces (see \cite{Andersonetal} for an alternative, equivalent method). We have used it to get the explicit expression of $G(\mathbb{S}^n;p,q)$ in Proposition \ref{prop:greenSn}.

\smallskip

Given a compact harmonic manifold $\mathcal{M}$, its Green function $G(\mathcal{M};p,q)$ is given by $G(\mathcal{M};p,q)=\phi(\mathcal{M};d_R(p,q))=\phi(\mathcal{M};r)$ for all $p,q\in\mathcal{M}$, where $\phi(\mathcal{M};r)$ satisfies
$$
\phi'(\mathcal{M};r)=-\frac{\int_r^D t^{n-1}\Omega(t)dt}{Vr^{n-1}\Omega(r)}.
$$
Here, $V$ is the volume of $\mathcal{M}$, $D$ its diameter, $r=d_R(p,q)$ the Riemannian distance and $\Omega(t)$ is the volume density function, that in the case of the sphere $\S^n$ satisfies
$$r^{n-1}\Omega(r)=\sin^{n-1}r.$$
(See \cite{BeltranCorralCriado2019} for the density functions of the projective spaces, and note that there are some ambiguities in the choice of the normalization that produce constant factors as powers of $2$: these constants do not affect the computation of the Green function).
\smallskip

\section{A bound on a function given by its series}
We have used several times the function
\begin{equation}\label{function_Sn}
S_n(s)=\sum_{k=0}^{\infty} \frac{(n)_k}{(\frac{n}{2}+1)_k(k+1)}s^{k+1}
=s+\sum_{k=1}^{\infty} \frac{(n)_k}{(\frac{n}{2}+1)_k(k+1)}s^{k+1},\quad s\in[0,1).
\end{equation}
In this section we show some elementary bounds and asymptotics regarding $S_n(s)$.
Recall Stirling's formula, valid for $x>0$:
$$
\sqrt{\frac{2\pi}{x}}\left(\frac{x}{e}\right)^x\leq \Gamma(x)\leq \sqrt{\frac{2\pi}{x}}\left(\frac{x}{e}\right)^x e^{\frac{1}{12x}},
$$
from which we immediately deduce that if $k\geq 1,\,\,n\geq 2$:
\begin{equation}\label{eq:stirling}
\frac{(n)_k}{(\frac{n}{2}+1)_k(k+1)}=
\frac{\Gamma(n+k)\Gamma(\frac{n}{2}+1)}{\Gamma(n)\Gamma(\frac{n}{2}+k+1)(k+1)} =\frac{\Gamma(\frac{n}{2}+1)}{\Gamma(n)}k^{\frac{n}{2}-2} +O(k^{\frac{n}{2}-3}).
\end{equation}

\begin{lemma}\label{exp_sum_cota}
Let $p>-1$. Then
\begin{equation*}
\displaystyle\sum_{k=1}^{\infty} k^{p}s^{k+1}\leq{\frac{sC(p)}{(1-s)^{p+1}}},\quad s\in[0,1),
\end{equation*}
for some constant $C(p)$ depending only on $p$. In particular, for $n\geq3$,
$$
S_n(s)\leq\frac{sC(n)}{(1-s)^{\frac{n}2-1}},
$$
where $S_n(s)$ is given in \eqref{function_Sn}.
\end{lemma}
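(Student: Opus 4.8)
The idea is to factor out one power of $s$ and reduce to bounding $\sum_{k\ge1}k^{p}s^{k}$ by a constant multiple of $(1-s)^{-p-1}$, and the natural object to compare against is Newton's generalized binomial series: for any real $p$ and $|s|<1$,
$$
\sum_{k=0}^{\infty}\frac{(p+1)_k}{k!}s^{k}=\frac{1}{(1-s)^{p+1}},
$$
where, since $p>-1$ forces $p+1>0$, every coefficient $\frac{(p+1)_k}{k!}=\frac{\Gamma(p+k+1)}{\Gamma(p+1)\Gamma(k+1)}$ is strictly positive. So the whole proof hinges on a pointwise comparison of $k^{p}$ with these coefficients.

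\textbf{Step 1: pointwise comparison.} I would first show $k^{p}\le C(p)\,\frac{\Gamma(p+k+1)}{\Gamma(k+1)}$ for every integer $k\ge1$. Indeed, the sequence $\frac{k^{p}\,\Gamma(k+1)}{\Gamma(p+k+1)}$ tends to $1$ as $k\to\infty$ by the classical asymptotic $\Gamma(k+a)/\Gamma(k+b)\sim k^{a-b}$ (an immediate consequence of Stirling's formula, essentially already recorded in \eqref{eq:stirling}); being a convergent sequence of positive numbers it is bounded above by a finite constant $C(p)$ depending only on $p$.

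\textbf{Step 2: summation.} With Step 1 in hand,
$$
\sum_{k=1}^{\infty}k^{p}s^{k+1}=s\sum_{k=1}^{\infty}k^{p}s^{k}\le s\,C(p)\sum_{k=1}^{\infty}\frac{\Gamma(p+k+1)}{\Gamma(k+1)}s^{k}\le s\,C(p)\,\Gamma(p+1)\sum_{k=0}^{\infty}\frac{(p+1)_k}{k!}s^{k}=\frac{s\,C(p)\,\Gamma(p+1)}{(1-s)^{p+1}},
$$
and absorbing $\Gamma(p+1)$ into the constant gives the first assertion. For the statement about $S_n(s)$ with $n\ge3$, write $S_n(s)=s+\sum_{k\ge1}c_k s^{k+1}$ with $c_k=\frac{(n)_k}{(\frac n2+1)_k(k+1)}$. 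By \eqref{eq:stirling}, $c_k/k^{\frac n2-2}\to\Gamma(\frac n2+1)/\Gamma(n)>0$, so $c_k\le C(n)\,k^{\frac n2-2}$ for all $k\ge1$. Since $n\ge3$, the exponent $p:=\frac n2-2$ satisfies $p>-1$, so the inequality just proved applies and yields $\sum_{k\ge1}c_k s^{k+1}\le C(n)\,\frac{s\,C(p)}{(1-s)^{n/2-1}}$; finally $s\le \frac{s}{(1-s)^{n/2-1}}$ because $\frac n2-1\ge\frac12>0$ makes $(1-s)^{n/2-1}\le1$, and combining the two terms and renaming constants gives $S_n(s)\le \frac{s\,C(n)}{(1-s)^{n/2-1}}$.

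\textbf{Main obstacle.} There is essentially no hard step; the only point requiring care is that one must land on the \emph{exact} exponent $p+1$: bounding $k^{p}$ by an integer-shifted product such as $(k+1)\cdots(k+\lceil p\rceil)$ would produce the exponent $\lceil p\rceil+1$, and since $(1-s)^{-\lceil p\rceil-1}\ge(1-s)^{-p-1}$ this goes the wrong way and does not give the claimed bound. Comparing directly against $\frac{(p+1)_k}{k!}$, i.e. against the coefficients of $(1-s)^{-p-1}$ itself, is precisely what avoids this loss.
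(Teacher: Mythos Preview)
Your argument is correct. The key identity $\sum_{k\ge0}\frac{(p+1)_k}{k!}s^k=(1-s)^{-p-1}$ together with the coefficient comparison $k^{p}\le C(p)\,\frac{\Gamma(p+k+1)}{\Gamma(k+1)}$ (which follows exactly as you say, from convergence of the ratio to $1$) does the job cleanly, and the deduction for $S_n$ is fine.

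The paper takes a different route: it compares the sum $\sum_{k\ge1}k^{p}s^{k+1}$ with the integral $\int_0^\infty x^{p}s^{x+1}\,dx=\frac{s\Gamma(p+1)}{(\ln\frac1s)^{p+1}}$, treating the cases $p\le0$ (monotone integrand) and $p>0$ (unimodal integrand, with the maximum controlled explicitly) separately, and then uses $\ln\frac1s\ge 1-s$ to pass from $(\ln\frac1s)^{-p-1}$ to $(1-s)^{-p-1}$. Your approach is shorter and avoids both the case split and the detour through $\ln\frac1s$. What the paper's approach buys, on the other hand, is a two-sided estimate: the same integral comparison yields a matching \emph{lower} bound $\sum_{k\ge1}k^{p}s^{k+1}\ge -\frac{sC(p)}{(\ln\frac1s)^{p}}+\frac{s\Gamma(p+1)}{(\ln\frac1s)^{p+1}}$, and it is precisely this two-sided control (with the explicit constant $\Gamma(p+1)$ in the main term) that is then used to prove the sharp asymptotic in Lemma~\ref{lem:asyntotic}. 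Your binomial-series argument, as written, only gives an upper bound with an unspecified constant, so it proves this lemma but would not by itself suffice for the next one.
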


\begin{proof}
There is no harm in assuming $s>0$. Let $f(x)=x^{p}s^{x+1}$. If $p\in(-1,0]$ then $f(x)$ is a decreasing function and
$$
\displaystyle\sum_{k=1}^{\infty} k^{p}s^{k+1}\leq\int_0^\infty f(x)\,dx=\frac{s\Gamma(p+1)}{(\ln\frac1s)^{p+1}}\leq\frac{sC(p)}{(1-s)^{p+1}}.
$$
If $p>0$, $f(x)$ attains its global maximum
$$s\left(\frac{p}{e\ln \frac1s}\right)^{p}$$ at $x_{\text{max}}= p/(\ln \frac1s)$. The comparison of the sum with the integral must be done in the two intervals separated by this point since in one side the terms are increasing and in the other side they are decreasing. All in one, we have
\begin{align*}
\sum_{k=1}^{\infty} k^{p}s^{k+1}\leq &3s\left(\frac{p}{e\ln \frac1s}\right)^{p}+\int_1^{x_{\text{max}}} x^{p}s^{x+1}\,dx+\int_{x_{\text{max}}}^\infty x^{p}s^{x+1}\,dx\\
\leq &\frac{sC(p)}{\left(\ln \frac1s\right)^{p}}+\int_0^{\infty} x^{p}s^{x+1}\,dx\\
=&\frac{sC(p)}{\left(\ln \frac1s\right)^{p}}+\frac{s\Gamma(p+1)}{(\ln\frac1s)^{p+1}},
\end{align*}
and the lemma follows. The same comparison argument also yields
$$
\sum_{k=1}^{\infty} k^{p}s^{k+1}\geq -\frac{sC(p)}{\left(\ln \frac1s\right)^{p}}+\frac{s\Gamma(p+1)}{(\ln\frac1s)^{p+1}}.
$$
\end{proof}

A finer analysis of the approximation argument above unveils the asymptotics of $S_n$ as its argument approaches $1$:
\begin{lemma}\label{lem:asyntotic}
The following asymptotic equality holds for $n\geq3$:
$$
\lim_{s\to 0}s^{\frac{n}{2}-1}S_n(1-s)= \frac{\Gamma(\frac{n}{2}+1)\Gamma(\frac{n}{2}-1)}{\Gamma(n)}.
$$
\end{lemma}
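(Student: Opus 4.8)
The plan is to understand the behaviour of the series $S_n(1-s)$ as $s\to 0$ (equivalently as the argument tends to $1$) by comparing the sum to an integral, as in the proof of Lemma \ref{exp_sum_cota}, but now keeping track of the leading constant rather than just an upper bound. Using the Stirling asymptotic \eqref{eq:stirling}, the $k$-th coefficient of $S_n$ is $\frac{\Gamma(n/2+1)}{\Gamma(n)}k^{n/2-2}+O(k^{n/2-3})$, so
\begin{equation*}
S_n(1-s)=\frac{\Gamma(\tfrac n2+1)}{\Gamma(n)}\sum_{k=1}^\infty k^{\frac n2-2}(1-s)^{k+1}+\sum_{k=1}^\infty O\!\left(k^{\frac n2-3}\right)(1-s)^{k+1}.
\end{equation*}
By Lemma \ref{exp_sum_cota} applied with $p=\tfrac n2-3>-1$ (here we use $n\geq 3$; note if $n=3,4$ one has $p\le 0$ but the lemma still applies, and for larger $n$ it is the $p>0$ case), the error term is $O\!\left((1-s)s^{-(n/2-2)}\right)=o\!\left(s^{-(n/2-1)}\right)$, hence negligible after multiplying by $s^{n/2-1}$. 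So it suffices to show
\begin{equation*}
\lim_{s\to 0}s^{\frac n2-1}\sum_{k=1}^\infty k^{\frac n2-2}(1-s)^{k+1}=\Gamma\!\left(\tfrac n2-1\right),
\end{equation*}
since then the full limit is $\frac{\Gamma(n/2+1)}{\Gamma(n)}\Gamma(n/2-1)$, as claimed.

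For this reduced statement I would again compare $\sum_k k^{p}(1-s)^{k+1}$ with $\int_0^\infty x^{p}(1-s)^{x+1}\,dx$, where $p=\tfrac n2-2$. As noted in the proof of Lemma \ref{exp_sum_cota}, the two-sided comparison (splitting at the maximiser $x_{\max}=p/\ln\frac1{1-s}$ when $p>0$, or using monotonicity directly when $-1<p\le 0$) gives
\begin{equation*}
\frac{(1-s)\,\Gamma(p+1)}{\left(\ln\frac1{1-s}\right)^{p+1}}-\frac{(1-s)\,C(p)}{\left(\ln\frac1{1-s}\right)^{p}}\ \le\ \sum_{k=1}^\infty k^{p}(1-s)^{k+1}\ \le\ \frac{(1-s)\,\Gamma(p+1)}{\left(\ln\frac1{1-s}\right)^{p+1}}+\frac{(1-s)\,C(p)}{\left(\ln\frac1{1-s}\right)^{p}}.
\end{equation*}
Now $\ln\frac1{1-s}=s+O(s^2)$, so $\left(\ln\frac1{1-s}\right)^{-(p+1)}=s^{-(p+1)}\left(1+O(s)\right)$ and the main term is $\Gamma(p+1)s^{-(p+1)}\left(1+O(s)\right)$, while the correction term is $O\!\left(s^{-p}\right)=o\!\left(s^{-(p+1)}\right)$. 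Multiplying through by $s^{p+1}=s^{n/2-1}$ and letting $s\to 0$ yields exactly $\Gamma(p+1)=\Gamma(n/2-1)$, completing the argument.

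The only genuinely delicate point is making the comparison-with-integral error term uniform and of the stated order near $s=1$; but this is already handled by the computation inside the proof of Lemma \ref{exp_sum_cota}, which one simply reads off with the constant retained rather than discarded. Everything else — the Stirling expansion \eqref{eq:stirling}, the elementary expansion of $\ln\frac1{1-s}$, and the absorption of the $O(k^{n/2-3})$ tail via Lemma \ref{exp_sum_cota} — is routine. So the structure is: isolate the leading coefficient via \eqref{eq:stirling}; bound the tail by Lemma \ref{exp_sum_cota}; evaluate the leading sum by sharp integral comparison; and substitute $\ln\frac1{1-s}\sim s$.
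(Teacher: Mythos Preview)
Your approach is correct and essentially identical to the paper's: isolate the leading coefficient $\frac{\Gamma(n/2+1)}{\Gamma(n)}k^{n/2-2}$ via the Stirling estimate \eqref{eq:stirling}, dispose of the $O(k^{n/2-3})$ remainder using Lemma \ref{exp_sum_cota}, and evaluate the main sum by the two-sided integral comparison together with $\ln\frac{1}{1-s}\sim s$. One small slip to fix: the assertion ``$p=\tfrac n2-3>-1$ (here we use $n\ge 3$)'' is false, since for $n=3,4$ one has $p\le -1$ and Lemma \ref{exp_sum_cota} as stated does not apply; however for those two cases the remainder series $\sum_k O(k^{n/2-3})(1-s)^{k+1}$ is either absolutely convergent ($n=3$) or only logarithmically divergent in $s$ ($n=4$), so after multiplication by $s^{n/2-1}$ it still tends to $0$ and your conclusion stands.
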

\begin{proof}
 From \eqref{eq:stirling} we have for some constant $C(n)>0$:
  \begin{align*}
  s^{\frac{n}{2}-1}S_n(1-s)\stackrel{\eqref{eq:stirling}}{\leq}
  \frac{\Gamma(\frac{n}{2}+1)}{\Gamma(n)}s^{\frac{n}{2}-1} \sum_{k=0}^{\infty} k^{\frac{n}{2}-2}\left(1+\frac{C(n)}{k}\right) (1-s)^{k+1}.
  \end{align*}
  Now, Lemma \ref{exp_sum_cota} implies that the sum with the $C(n)/k$ term is bounded above by a constant divided by $s^{\frac{n}{2}-2}$, which becomes irrelevant in the limit. Hence,
  \begin{align*}
  \lim_{s\to 0}s^{\frac{n}{2}-1}S_n(1-s)\leq&\lim_{s\to 0}\frac{\Gamma(\frac{n}{2}+1)s^{\frac{n}{2}-1}}{\Gamma(n)}
  \int_1^{\infty} x^{\frac{n}{2}-2}(1-s)^{x+1}\,dx\\
  =&\lim_{s\to 0}\frac{\Gamma(\frac{n}{2}+1)s^{\frac{n}{2}-1}\Gamma(\frac{n}{2}-1)}{\Gamma(n) \ln\left(\frac{1}{1-s}\right)^{\frac{n}{2}-1}}\\
  =&\frac{\Gamma(\frac{n}{2}+1)\Gamma(\frac{n}{2}-1)}{\Gamma(n)}.
  \end{align*}
  The lower bound is proved the same way (this time the constant $C(n)$ is negative, but again it plays no role in the limit).
\end{proof}


\end{document}